\theoremstyle{plain}
\newcommand{\id}{\operatorname{id}}
\newcommand{\integers}[1]{\operatorname{\mathcal{O}_{{\it #1}}}}
\newcommand{\ideal}[1]{\operatorname{\mathfrak{P}_{{\it #1}}}}
\newcommand{\ind}{\operatorname{ind}}
\newcommand{\res}{\operatorname{res}}
\newcommand{\ho}{\operatorname{Hom}}
\newcommand{\mir}{\operatorname{Mir}}
\newcommand{\g}[2]{\operatorname{GL}_{#1}(#2)}
\newcommand{\diag}{\operatorname{diag}}
\newcommand{\tr}{\operatorname{tr}}
\newcommand{\Rad}{\operatorname{Rad}}
\newtheorem{theorem}{Theorem}[section]
\newtheorem{corollary}[theorem]{Corollary}
\newtheorem{lemma}[theorem]{Lemma}
\newtheorem{remark}[theorem]{Remark}
\newtheorem{proposition}[theorem]{Proposition}
\newtheorem{definition}[theorem]{Definition}
\author{\large{ Santosh Nadimpalli }}
\date{\today}
\begin{document}
\title{Typical representations for level zero Bernstein components of $\g{n}{F}$}
\maketitle
\begin{abstract}
  Let $F$ be a non-discrete non-Archimedean locally compact field. In
  this article for a level zero Bernstein component $s$, we classify
  those irreducible smooth representations of $\g{n}{\integers{F}}$
  (called typical representations) whose appearance in a smooth
  irreducible representation $\pi$ of $\g{n}{F}$ implies that the
  cuspidal support of $\pi$ is $s$.  These results extend, for level
  zero representations, the results of Henniart and Pa\v{s}k\={u}nas on
  cuspidal representations. The results are independent of the
  characteristic of the base field.
\end{abstract}

\section{Introduction}

Let $F$ be a non-discrete non-Archimedean locally compact field.
 The isomorphism classes of irreducible smooth complex
representations of $\g{n}{F}$, denoted by  $\mathcal{A}_n$, can be decomposed
as a disjoint union 
$$\mathcal{A}_n=\prod_{s\in \mathcal{B}_n}\mathcal{A}_n(s),$$
where $\mathcal{A}_n(s)$ is defined in terms of parabolic induction
and the parameter $s$ is called the Bernstein component or inertial
support. In the context of the local Langlands correspondence, the
parameter $s$ determines the isomorphism class of the restriction to
the inertia subgroup $I_F$ of the Weil--Deligne representation associated by
the classical local Langlands correspondence.

The Local class field theory gives a natural isomorphism between $I_F$
and $\mathcal{O}_F^{\times}$, the group of units of the ring of
integers of $F$. It is natural to ask for a relation between the
representations of $I_F$ which can be extended to a Weil--Deligne
representation and the representations of the maximal compact
subgroup $\g{n}{\integers{F}}$. One natural way would be to understand the
cuspidal support of a smooth irreducible representation from its
restriction to $\g{n}{\integers{F}}$. Indeed in several arithmetic
applications (see \cite{Henniart-gl_2}, \cite{geo_breuil_mez}) it is
desired to construct irreducible smooth representations $\tau_s$ of
the maximal compact subgroup $\g{n}{\integers{F}}$ such that for any
irreducible smooth representation $\pi$ of $\g{n}{F}$,
$$\ho_{ \g{n}{\integers{F}}}(\tau_s, \pi)\neq 0 \Rightarrow \pi
\in \mathcal{A}_n(s).$$
Such a representation $\tau_s$ is called a {\bf typical
  representation} for $s$. In this article we completely classify
typical representations $\tau_s$ for all level-zero Bernstein
components (see section 3) of $\g{n}{F}$.

The existence of typical representation, for any $s$, follows from the
theory of types developed by Bushnell and Kutzko. For all
$s\in \mathcal{B}_n$, Bushnell and Kutzko constructed pairs
$(J_s, \lambda_s)$ such that for any irreducible smooth representation
$\pi$ of $\g{n}{F}$,
$$\ho_{ J_s}(\lambda_s, \pi)\neq 0 \Leftrightarrow \pi
\in \mathcal{A}_n(s).$$
It follows from Frobenius reciprocity that any irreducible
subrepresentation of 
\begin{equation}\label{typical}
\ind_{J_s}^{\g{n}{\integers{F}}}\lambda_s
\end{equation}
is a typical representation for $s$. In general the representation
\eqref{typical} is not irreducible and it is not known if there are
any other typical representations which are not subrepresentations of
\eqref{typical}. 

For $n=2$, Henniart (see \cite{Henniart-gl_2}) classified typical
representations for all inertial classes.  Later Pa\v{s}k\={u}nas (see
\cite{Paskunas-uniqueness}) classified typical representations
occurring in cuspidal representation of $\g{n}{F}$ for $n\geq 3$. It
turns out that there exists a unique typical representation occurring
in a cuspidal representation. For a general Bernstein component $s$,
typical representations may not be unique. In this article we classify
typical representations for a level zero Bernstein component~$s$.

We now describe the main result of this article.  Let $s$ be a level
zero Bernstein component and $(J_s, \lambda_s)$ (see section 3 for a
complete description of the type $(J_s, \lambda_s)$) be the
Bushnell--Kutzko type for the component $s$. We will prove that
\begin{theorem}
  Any typical representation $\tau_s$ for a level-zero Bernstein
  component $s$ occurs as a subrepresentation of
\begin{equation}\label{intro_equation_1}
  \ind_{J_s}^{\g{n}{\integers{F}}}(\lambda_s).
\end{equation}
\end{theorem}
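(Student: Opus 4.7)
The plan is to first reduce to a finite-group problem by arguing that any typical representation $\tau_s$ for a level-zero component is itself level zero, i.e.\ factors through the finite quotient $\overline{G}:=\g{n}{k_F}$; then to identify the finite Harish-Chandra cuspidal support of $\tau_s$ with that of $\lambda_s$; the conclusion then follows from Frobenius reciprocity on $K:=\g{n}{\integers{F}}$.

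\emph{Step 1 (level-zero reduction).} I would first show that $\tau_s$ is trivial on the pro-unipotent radical $K^1:=1+\mathfrak{P}_F M_n(\integers{F})$. If not, $\tau_s|_{K^1}$ contains a non-trivial character, to which one attaches a fundamental stratum of positive depth via the stratification theory of Bushnell--Kutzko. By the theory of types applied to such a stratum, one produces an irreducible smooth representation $\pi$ of $\g{n}{F}$ of positive depth containing this character, and with $\tau_s$ as a $K$-subrepresentation; as $\pi\notin\mathcal{A}_n(s)$, this contradicts typicality. Hence $\tau_s$ descends to an irreducible representation of $\overline{G}$.

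\emph{Step 2 (identifying the finite parabolic induction).} For $s=[M,\sigma]_G$ with $M=\prod_i\g{n_i}{F}$ and $\sigma=\otimes_i\sigma_i$ a level-zero cuspidal, the Bushnell--Kutzko type $(J_s,\lambda_s)$ has $J_s$ (up to a mild modification by normalising elements when blocks repeat) equal to the parahoric in $K$ with Levi quotient $\overline{M}=\prod_i\g{n_i}{k_F}$, and $\lambda_s$ the inflation of the cuspidal $\bar\sigma=\otimes_i\bar\sigma_i$ of $\overline{M}$. Since $\lambda_s$ is trivial on $K^1$, the induced representation $\ind_{J_s}^{K}\lambda_s$ descends to $\overline{G}$ and coincides with the Harish-Chandra parabolic induction $R_{\overline{M}}^{\overline{G}}\bar\sigma$. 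Thus the theorem reduces to showing that the finite cuspidal support of $\tau_s$ coincides, up to $\overline{G}$-conjugation, with $(\overline{M},\bar\sigma)$.

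\emph{Step 3 (cuspidal support match).} Let $(\overline{L},\bar\rho)$ be the Harish-Chandra cuspidal support of $\tau_s$ as a representation of $\overline{G}$. Using the classification of level-zero cuspidal representations of $p$-adic general linear groups as compact inductions of inflated finite cuspidals, lift $\bar\rho$ to a level-zero cuspidal $\tilde\rho$ of the $p$-adic Levi $L\le G$ with reduction $\overline{L}$. For a generic unramified character $\chi$ of $L$, the parabolic induction $\pi:=\iota_{P^-}^{G}(\tilde\rho\otimes\chi)$ is irreducible. Using the Iwasawa decomposition $G=P^-K$, one has $\pi|_K\cong\ind_{P^-\cap K}^{K}((\tilde\rho\otimes\chi)|_{P^-\cap K})$, which contains as a direct summand $\ind_{\overline{P}^-}^{\overline{G}}\bar\rho=R_{\overline{L}}^{\overline{G}}\bar\rho$ (because $\tilde\rho|_{L\cap K}$ contains the inflation of $\bar\rho$). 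By definition of cuspidal support, $\tau_s$ embeds into $R_{\overline{L}}^{\overline{G}}\bar\rho$, and hence $\ho_K(\tau_s,\pi)\neq 0$. Typicality then forces $\pi\in\mathcal{A}_n(s)$, i.e.\ $[L,\tilde\rho]_G=s$; in particular $(\overline{L},\bar\rho)$ and $(\overline{M},\bar\sigma)$ are $\overline{G}$-conjugate. Applying Harish-Chandra reciprocity in $\overline{G}$, $\tau_s$ embeds in $R_{\overline{M}}^{\overline{G}}\bar\sigma\cong\ind_{J_s}^K\lambda_s$.

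The principal obstacle is Step 1: forcing a typical representation to be level zero requires constructing, from any $K^1$-character appearing in $\tau_s$, an irreducible smooth $G$-representation of positive depth in whose $K$-restriction $\tau_s$ appears, and this must be done uniformly in the residue characteristic. A secondary subtlety arises in Step 3: the unramified twist $\chi$ must be chosen both to guarantee irreducibility of $\pi$ and to preserve the inertial equivalence class $s=[L,\tilde\rho]_G$, and when the cuspidal inertial data of $s$ contains repetitions the identification of $\ind_{J_s}^{K}\lambda_s$ with the finite Harish-Chandra induction must be complemented by an analysis of the finite normaliser of the parahoric, to ensure that the embedding of $\tau_s$ into $R_{\overline{M}}^{\overline{G}}\bar\sigma$ lifts faithfully to one into $\ind_{J_s}^K\lambda_s$.
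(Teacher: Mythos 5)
Your approach is genuinely different from the paper's, and the crucial gap is in Step~1, which you yourself flag as the principal obstacle. The paper never tries to establish \emph{a priori} that a typical $\tau_s$ for a level-zero $s$ is trivial on $K_n(1)$; that fact falls out only at the end, as a consequence of the full analysis. Instead, the paper observes (using Pa\v{s}k\={u}nas's uniqueness for cuspidal typicals, Lemma~\ref{lemma_prelim_1}) that any typical $\tau_s$ must already appear in $\ind_{P_I\cap K_n}^{K_n}(\tau_I)$, writes this as an increasing union $\bigcup_m \ind_{P_I(m)}^{K_n}(\tau_I)$ (Lemma~\ref{lemma_prelim_2}), and then proves the hard Theorem~\ref{theorem_level_1}: the complement $U_m(\tau_I)$ of $\ind_{P_I(1)}^{K_n}(\tau_I)$ inside $\ind_{P_I(m)}^{K_n}(\tau_I)$ consists entirely of atypical constituents. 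That theorem is proved by induction on $n$ with cases $n_r=1$ and $n_r>1$, using Clifford theory for the groups $P_I(1,m)$, the orbit analysis of Lemma~\ref{lemma_level_4}, Bernstein--Zelevinsky derivatives for $\g{n}{k_F}$, and Casselman's explicit $K_2$-decomposition of $\g{2}{F}$-representations. The content of the theorem is precisely what your Step~1 would need as input.

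Concretely, your Step~1 argument does not go through as sketched. If $\tau_s$ is nontrivial on $K_n(1)$, you extract a nontrivial character $\psi$ of $K_n(m)/K_n(m+1)$ for some $m\geq 1$ and hence a stratum. Refining this stratum produces an irreducible smooth $\pi$ of positive depth such that $\ho_{K_n(m)}(\psi,\pi)\neq 0$; but that only tells you that \emph{some} constituent of $\ind_{K_n(m)}^{K_n}\psi$ occurs in $\pi$, not that the particular constituent $\tau_s$ does. There is no a~priori control guaranteeing that a positive-depth $\pi$ containing the character also contains the specific $K_n$-type $\tau_s$ --- this is exactly the delicate point, and it is why the paper needs the detailed derivative/Casselman analysis rather than a one-step reduction. (Note also that ``not trivial on $K_n(1)$'' does not mean ``occurs only in positive-depth $\pi$'': the Steinberg representation is depth zero and its $K_2$-restriction contains many $K$-types nontrivial on $K_2(1)$, so the statement you want is genuinely about \emph{typicality}, not about depth of the ambient $\pi$.) Once one grants a correct Step~1, your Steps~2--3 are in the right spirit --- matching finite cuspidal supports via generic unramified twists and Harish-Chandra reciprocity is essentially what the paper's use of $\ind_{P_I(1)}^{K_n}(\tau_I)$ and its identification with the inflated Harish-Chandra induction accomplishes --- but Step~1 is the entire difficulty and cannot be disposed of by citing fundamental strata.
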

In our analysis we will also obtain a certain multiplicity result on
the typical representations $\tau_s$ (see Corollary
\ref{levelzero_coro}).

This article is based on chapter $3$ of my thesis. In my thesis typical
representations are classified for several Bernstein components. I
would like to thank my thesis advisor Guy Henniart for suggesting this
problem and numerous discussions. I thank Corinne Blondel for pointing
out several corrections and improvements. I would like to express my
deep gratitude to the referee for very useful suggestions and
comments.

\section{Preliminaries}
\subsection{Basic notation}
Let $F$ be a non-Archimedean local field with ring of integers
$\integers{F}$, maximal ideal $\ideal{F}$ and a finite residue field
$k_F$. All our representations are on vector spaces over $\mathbb{C}$.

Let $G$ be a locally pro-finite group and $H$ be a closed subgroup of
$G$. For $(\tau, V)$ a smooth representation of $H$, we denote by
$\ind_{H}^{G}(\tau)$ the induced smooth representation of $G$ and by
c-$\ind_{H}^{G}(\tau)$ the compactly induced representation. When $G$
is the group of $F$-rational points of an algebraic reductive group,
the group $G$ is equipped with a locally profinite topology induced
from $F$. For $P$ the set of $F$-rational points of an $F$-parabolic
subgroup of $G$ and $\sigma$ a smooth representation of a Levi
subgroup $M$ of $P$, we denote by $i^G_P(\sigma)$ the normalized
parabolically-induced representation.

For any two groups $H_1$ and $H_2$ such that $H_2\subset H_1$ and
$\sigma$ a representation of $H_1$, we denote by $\res_{H_2}(\sigma)$
the restriction of $\sigma$ to $H_2$. We use $\boxtimes$ and $\otimes$
for the tensor product of representations of two different groups and
the same group respectively.  If $H_2$ is a subgroup of a group $H_1$,
$\tau$ is a representation of $H_2$ and $h\in H_1$ then we denote by
$^{h}\tau$ the representation of $hH_2h^{-1}$ given by
$h'\mapsto \tau(h^{-1}h'h)$ for all $h'\in hH_2h^{-1}$.

After recalling some general definitions we will restrict ourself to
the case where $G=\g{n}{F}$ and the following notation will be used:
We denote by $G_n$ the group $\g{n}{F}$ and by $K_n$ the maximal
compact subgroup $\g{n}{\integers{F}}$. Let $K_n(m)$ the principal
congruence subgroup of $\g{n}{\integers{F}}$ of level $m$.

Let $I=(n_1,n_2,n_3,\dots, n_r)$ be an ordered partition of a positive
integer~$n$. Let $R$ be an $\mathcal{O}_F$ algebra. Let $P_I(R)$ be
the group of invertible block upper triangular matrices of type
$(n_1,n_2,\dots, n_r)$ with entries in $R$. We denote by $M_I(R)$ and
$U_I(R)$ the subgroups of $P_I(R)$ consisting of block diagonal
matrices of type $I$ and the unipotent unipotent matrices of type $I$
respectively. We use the notation $P_I$, $M_I$ and $U_I$ for $P_I(F)$,
$M_I(F)$ and $U_I(F)$ respectively.  We call $P_I$ and $M_I$ the
standard parabolic subgroup and standard Levi subgroup of type $I$
respectively.

\subsection{Bernstein decomposition and typical representations}
Let $B(G)$ be the set of pairs $(M, \sigma)$ where $M$ is a
Levi subgroup of an $F$-parabolic subgroup $P$ of $G$ and $\sigma$ is
an irreducible supercuspidal representation of $M$. Recall that the inertial
equivalence relation on $B(G)$ is defined  by setting
$$(M_1, \sigma_1)\sim (M_2, \sigma_2)$$
if and only if there exist an element $g\in G$ and an unramified
character $\chi$ of $M_2$ such that $M_1=gM_2g^{-1}$ and
$\sigma_1^g\simeq \sigma_2\otimes\chi$. We denote by $\mathcal{B}_G$
the set of such equivalence classes called {\bf inertial classes} or
{\bf Bernstein components}. Any irreducible smooth representation
$\pi$ of $G$ occurs as a sub-representation of a parabolic induction
$i_{P}^{G}(\sigma)$ where $\sigma$ is an irreducible supercuspidal
representation of a Levi subgroup $M$ of $P$. The pair $(M, \sigma)$
is well determined up to $G$-conjugation. We call the class
$s=[M, \sigma]$ the {\bf inertial support of $\pi$}. We will use the
notation $I(\pi)$ for the inertial support of $\pi$.

Let $\mathcal{M}(G)$ be the category of all smooth representations of
$G$. For an inertial class $s=[M, \sigma]$ we denote by
$\mathcal{M}_s(G)$ the full sub-category consisting of smooth
representations all of whose irreducible sub-quotients appear in the
composition series of some $i_{P}^{G}(\sigma\otimes\chi)$, with $\chi$
an unramified character of $M$.  It is shown by Bernstein (see \cite
[VI.7.2, Theorem]{Renard-p-adicrepbook}) that the category
$\mathcal{M}(G)$ decomposes as a direct product of $\mathcal{M}_s(G)$
in particular every smooth representation can be written as a direct
sum of objects in the categories $\mathcal{M}_s(G)$. We denote by
$\mathcal{A}_G(s)$ the set of isomorphism classes of simple objects in
the category $\mathcal{M}_s(G)$. If $G=\g{n}{F}$ we use the notation
$\mathcal{A}_n(s)$ for $\mathcal{A}_G(s)$ and $\mathcal{B}_n$ for
$\mathcal{B}_{\g{n}{F}}$.

Given an irreducible smooth representation $\rho$ of a maximal compact
subgroup $K$ of $G$ the compact induction
$\pi:=\text{c-}\ind_{K}^{G}(\rho)$ is a finitely generated smooth
representation of $G$ and hence there exists an irreducible
$G$-quotient of $\pi$. By Frobenius reciprocity \cite[Proposition
2.5]{Yellowbook} we get that $\rho$ occurs in a smooth irreducible
representation of $G$. For a given inertial class, we are interested
in the representations $\rho$ of $K$ which only occur in irreducible
smooth representations with inertial support $s$.

\begin{definition}\label{definition_main}
  Let $s$ be an inertial class for $G$. An irreducible smooth
  representation $\tau$ of a maximal compact subgroup $K$ of $G$ is
  called {\bf $K$-typical representation for $s$} if, for any irreducible
  smooth representation
  $\pi$ of $G$, $\ho_{K}(\tau, \pi)\neq 0$ implies that  $\pi\in \mathcal{A}_{G}(s)$.
\end{definition}

In this article we will confine ourselves to the cases where
$G=\g{n}{F}=G_n$, $K=\g{n}{\integers{F}}=K_n$ and $n\geq 2$ and in
these cases we call a $K$-typical representation for $s$ a typical
representation for $s$.  An irreducible representation $\tau$ of $K_n$
is called \emph{atypical} if $\tau$ occurs in two irreducible smooth
representations $\pi_1$ and $\pi_2$ such that $I(\pi_1)\neq I(\pi_2)$.

For any component $s\in \mathcal{B}_n$, the existence of a typical
representation can be deduced from the theory of types developed by
Bushnell and Kutzko in the articles
\cite{Bushnell-kutzko-Semisimpletypes} and \cite{Orrangebook}.
Bushnell and Kutzko constructed a pair $(J_s, \lambda_s)$, which we
call a \emph{Bushnell--Kutzko type}, where $J_s$ is a compact open
subgroup of $\g{n}{F}$ and $\lambda_s$ is an irreducible
representation of $J_s$ such that for every irreducible smooth
representation $\pi$ of $G_n$,
$$\ho_{J_s}(\pi, \lambda_s)\neq 0\ \Leftrightarrow\ \pi \ \in \ \mathcal{A}_n(s).$$ 
The group $J_s$ can be arranged to be a subgroup of
$\g{n}{\integers{F}}$ by conjugating with an element of $\g{n}{F}$ and
hence we assume that $J_s\subset \g{n}{\integers{F}}$.  It follows
from Frobenius reciprocity that any irreducible sub-representation of
\begin{equation}\label{bushnell_max_induction}
\ind_{J_s}^{\g{n}{\integers{F}}}(\lambda_s)
\end{equation}
is a typical representation. The irreducible sub-representations of
(\ref{bushnell_max_induction}) are classified by Schneider and Zink in
\cite[Section 6, $T_{K,\lambda}$ functor]{SchneiderKtypes}.

For $s=[G_n, \sigma]$, Pa\v{s}k\={u}nas in 
\cite [Theorem 8.1]{Paskunas-uniqueness} showed that up to isomorphism
there exists a unique typical representation for $s$. More precisely,
\begin{theorem}[Pa\v{s}k\={u}nas]
  Let $n$ be a positive integer greater than one and $\sigma$ be an
  irreducible supercuspidal representation of $G_n$. Let
  $(J_s, \lambda_s)$ be a Bushnell--Kutzko type for the component
  $s=[G_n, \sigma]$ with $J_s\subset \g{n}{\integers{F}}$. The
  representation
$$\ind_{J_s}^{K_n}(\lambda_s)$$
is the unique typical representation for the component $[G_n, \sigma]$
and occurs with multiplicity one in $\sigma\otimes\chi$, for all
unramified characters $\chi$ of $G_n$.
\end{theorem}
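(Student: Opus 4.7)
The plan is to establish three facts: (i) $\tau:=\ind_{J_s}^{K_n}(\lambda_s)$ is irreducible, (ii) every typical representation for $s$ is isomorphic to $\tau$, and (iii) $\tau$ occurs in every member of $\mathcal{A}_n(s)$ with multiplicity exactly one. Assertions (i) and (iii) follow quickly from the Bushnell--Kutzko cuspidal type theory together with Frobenius reciprocity; the substance of the theorem is (ii).

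For (i), I would invoke Mackey's irreducibility criterion. The theory attaches to $\sigma$ a compact-mod-centre open subgroup $\mathbb{J}_s\supset J_s$ with $\mathbb{J}_s\cap K_n=J_s$, and an extension $\Lambda_s$ of $\lambda_s$ to $\mathbb{J}_s$ such that the $G_n$-intertwining of $\lambda_s$ equals $\mathbb{J}_s$ and $\sigma\simeq\cind_{\mathbb{J}_s}^{G_n}(\Lambda_s)$. Hence no element of $K_n\setminus J_s$ intertwines $\lambda_s$, and $\tau=\ind_{J_s}^{K_n}(\lambda_s)$ is irreducible. Being a subrepresentation of \eqref{bushnell_max_induction}, $\tau$ is then typical. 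For (iii), Frobenius reciprocity combined with the Mackey decomposition of $\cind_{\mathbb{J}_s}^{G_n}(\Lambda_s)$ restricted to $J_s$ gives
\begin{equation*}
\dim\ho_{K_n}(\tau,\sigma\otimes\chi)\ =\ \dim\ho_{J_s}(\lambda_s,\sigma\otimes\chi)\ =\ 1,
\end{equation*}
the nontrivial double cosets contributing nothing because $\mathbb{J}_s$ is the full intertwiner of $\lambda_s$.

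For (ii), let $\tau'$ be any typical representation for $s$. Then $\tau'$ embeds into some $\pi=\sigma\otimes\chi\in\mathcal{A}_n(s)$, hence into $\res_{K_n}\sigma$. Mackey's theorem applied to $\sigma\simeq\cind_{\mathbb{J}_s}^{G_n}(\Lambda_s)$ yields
\begin{equation*}
\res_{K_n}(\sigma)\ \simeq\ \bigoplus_{g\in \mathbb{J}_s\backslash G_n/K_n}\ind_{g^{-1}\mathbb{J}_s g\cap K_n}^{K_n}\bigl({}^{g^{-1}}\!\Lambda_s|_{g^{-1}\mathbb{J}_s g\cap K_n}\bigr),
\end{equation*}
and the trivial double coset contributes exactly $\tau$. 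It therefore suffices to prove that each summand indexed by $g\notin \mathbb{J}_s K_n$ consists entirely of atypical constituents. For such $g$, chosen from a Cartan set of representatives, I would exhibit a proper standard parabolic $P=MU$ and an irreducible smooth representation $\rho$ of $M$ with $[M,\rho]\neq s$ such that the Mackey summand at $g$ shares an irreducible constituent with $\res_{K_n}i_P^{G_n}(\rho)$. The Iwasawa decomposition $G_n=PK_n$ then identifies $\res_{K_n}i_P^{G_n}(\rho)\simeq \ind_{P\cap K_n}^{K_n}\res_{M\cap K_n}(\rho)$, and any constituent of this restriction occurs in a representation with inertial support of the form $[M,\rho_0]\neq s$, yielding the desired atypicality.

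The main obstacle is precisely this matching step: for each nontrivial double coset one must produce a concrete pair $(M,\rho)$ and verify the coincidence of a $K_n$-constituent on both sides. This requires handling the fine structure of the cuspidal type --- the simple stratum, the $\beta$-extension and the inflated cuspidal of a finite general linear group --- and understanding how conjugation by $g$ deforms $\Lambda_s$ into a representation detectable inside a proper parahoric subgroup of $K_n$. Once this is achieved for every nontrivial double coset, part (ii) is established and the theorem follows.
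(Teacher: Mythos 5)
The paper states this theorem without proof, quoting it from \cite[Theorem 8.1]{Paskunas-uniqueness}, so there is no in-paper argument to compare against. Your arguments for (i) and (iii) are sound: given that the $G_n$-intertwining of $\lambda_s$ is exactly $\mathbb{J}_s$ with $\mathbb{J}_s\cap K_n=J_s$ and $\Lambda_s|_{J_s}=\lambda_s$, irreducibility of $\ind_{J_s}^{K_n}(\lambda_s)$ is the Mackey criterion, and the multiplicity-one statement follows from Frobenius reciprocity together with the Mackey decomposition of $\res_{J_s}\sigma$ (unramified $\chi$ being trivial on $K_n$). Your reduction of (ii) to the atypicality of the nontrivial Mackey summands is also the reduction that Pa\v{s}k\={u}nas actually uses.

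The genuine gap is exactly the step you flag as "the main obstacle," and it is where all the content of the theorem lives. Declaring that it "suffices to" exhibit, for each $g\notin\mathbb{J}_sK_n$, a proper parabolic $P=MU$ and a $\rho$ with $[M,\rho]\neq s$ matching a constituent of $\ind_{g^{-1}\mathbb{J}_s g\cap K_n}^{K_n}\bigl({}^{g^{-1}}\Lambda_s\bigr)$ is a correct reduction but not a proof. Carrying it out requires choosing $g$ in Cartan normal form, showing that $g^{-1}\mathbb{J}_s g\cap K_n$ has an Iwahori decomposition relative to a proper parabolic $P$ determined by the elementary divisors of $g$, verifying that ${}^{g^{-1}}\Lambda_s$ is trivial on $U\cap K_n$ (this uses the explicit structure of the simple stratum and the $\beta$-extension, and is where the hard computations sit), and then, on the Levi, shifting the cuspidal support by a lemma of the same kind as Lemma~\ref{lemma_level_5_6} of the present paper. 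None of that work appears in your write-up: as it stands it is an accurate road map through the proof, but it leaves out everything that makes part (ii), and hence the uniqueness claim, go through.
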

We will consider the classification of typical representations for
components $[M, \sigma]$ where $M$ is a Levi subgroup of a proper
parabolic subgroup of $G_n$.

Let $s=[M, \sigma]$ be an inertial class of $G_n$. We will choose a
representative for $s$. Let $P$ be a parabolic subgroup with $M$ as
its Levi subgroup. There exists a $g\in G_n$ such that $gPg^{-1}=P_I$
for some ordered partition $I=(n_1,n_2,\dots,n_r)$ of $n$. The groups
$gMg^{-1}$ and $M_I$ are two Levi subgroups of $P_I$ hence we get an
$u\in \Rad{P_I}$ such that $ugM(ug)^{-1}=M_I$. This shows that there
exists an element $g'\in G_n$ such that $g'Mg'^{-1}= M_I$. Let $J$ be
a permutation of the ordered partition $(n_1,n_2,\dots,n_r)$. We can
choose a $g''\in G_n$ such that $M_I$ and $M_J$ are conjugate so the
two pairs $(M, \sigma)$ and $(M_J, \sigma^{g'g''})$ are inertially
equivalent. In certain cases it is convenient to choose a particular
permutation. For example in the proof of the main theorem in this
article we choose $J=(n_1', n_2',\dots n_r')$ such that
$n_i'\leq n_j'$ for all $i\leq j$. We denote by $\sigma_I$ and
$\sigma_J$ the representations $\sigma^{g'}$ and $\sigma^{g'g''}$
respectively and hence $s=[M_I, \sigma_I]=[M_J, \sigma_J]$.

Let $\tau$ be a typical representation for the component $s$. The
representation $\tau$ occurs as a $K_n$ sub-representation of a
$G_n$-irreducible smooth representation $\pi$ (see the reasoning given
in the paragraph above Definition \ref{definition_main}).  From the
above paragraph $\pi$ occurs in the composition series of
$i_{P_I}^{G_n}(\sigma_I)$ where $\sigma_I$ is a supercuspidal
representation of $M_I$.  Hence to classify typical representations we
fix a pair $(M_I, \sigma_I)\sim (M,\sigma)$ and examine the
$K_n$-irreducible sub-representations of
$$\res_{K_n}( i_{P_I}^{G_n}(\sigma_I)),$$
looking for possible typical representations for $s$.

By the Iwasawa decomposition $G_n=K_nP_I$ we get that
$$\res_{K_n}( i_{P_I}^{G_n}(\sigma_I))\simeq  \ind_{P_I\cap K_n}^{K_n}(\sigma_I).$$
We write $\sigma_I$ as $\boxtimes_{i=1}^{r}\sigma_i$ where $\sigma_i$
is a supercuspidal representation of $G_{n_i}$ for $1\leq i\leq r$.
We denote by $\tau_i$ the unique typical representation for the
component $[G_{n_i}, \sigma_i]$ for $1\leq i\leq r$ and let $\tau_I$
be the $M_I(\integers{F})$-representation
$\boxtimes_{i=1}^{r}\tau_i$. Will Conley observed in his thesis (see
\cite[Theorem 2.28]{conley_thesis}) that the representation
$$\ind_{P_I\cap K_n}^{K_n}(\tau_I)$$
admits a complement in $\ind_{P_I\cap K_n}^{K_n}(\sigma_I)$ whose
irreducible sub-representations are atypical for $s$. We prove a mild
generalisation which will be used later in proofs by induction.

Let $t_i=[M_i, \lambda_i]$ be a Bernstein component of $G_{n_i}$
for $1\leq i\leq r$.  Let $\sigma_i$ be a smooth representation from
$\mathcal{M}_{t_i}({G_{n_i}})$. We suppose
$$\res_{K_{n_i}}\sigma_i=\tau_i^0\oplus\tau_i^1$$ for
$1\leq i\leq r$ such that irreducible subrepresentations of
$\tau_i^1$ are atypical.  We denote by $t$ the Bernstein component
$$[M_1\times M_2 \times \dots \times M_r,
\lambda_1\boxtimes\lambda_2\boxtimes\dots\boxtimes\lambda_r]$$ of
$G_n$. 
The component $t$ is independent of the choice of representatives 
$(M_i, \lambda_i)$. Let $\tau_I^0=\boxtimes_{i=1}^{r}\tau_i^0$ and 
$\sigma_I=\boxtimes_{i=1}^{r}(\sigma_i)$.

\begin{proposition}\label{proposition_prelim_1}
The representation 
$$\ind_{P_I\cap K_n}^{K_n}(\tau_I^0)$$
admits a complement in $\res_{K_n}i_{P_I}^{G_n}(\sigma_I)$ with all its
irreducible subrepresentations atypical.
\end{proposition}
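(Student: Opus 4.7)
My strategy is to push the given $K_{n_i}$-level splittings through the Iwasawa identification, and then to reduce atypicality of each non-trivial $\epsilon$-piece to the atypicality assumption at a single coordinate $i_0$ with $\epsilon_{i_0}=1$. The Iwasawa decomposition $G_n = K_n P_I$ gives
\[
  \res_{K_n}\bigl(i_{P_I}^{G_n}(\sigma_I)\bigr) \;\simeq\; \ind_{P_I \cap K_n}^{K_n}(\sigma_I),
\]
where $\sigma_I = \boxtimes_{i=1}^r \sigma_i$ is regarded as a $(P_I \cap K_n)$-representation through the quotient $P_I \cap K_n \twoheadrightarrow M_I \cap K_n = \prod_{i=1}^r K_{n_i}$. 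Distributing the given splittings $\res_{K_{n_i}} \sigma_i = \tau_i^0 \oplus \tau_i^1$ across the external tensor product yields a $(P_I \cap K_n)$-decomposition
\[
  \res_{P_I \cap K_n}(\sigma_I) \;=\; \bigoplus_{\epsilon \in \{0,1\}^r} \tau_I^{\epsilon}, \qquad \tau_I^{\epsilon} := \boxtimes_{i=1}^r \tau_i^{\epsilon_i},
\]
and by exactness of induction the natural complement of $\ind_{P_I \cap K_n}^{K_n}(\tau_I^0)$ inside $\ind_{P_I \cap K_n}^{K_n}(\sigma_I)$ is $W := \bigoplus_{\epsilon \neq 0} \ind_{P_I \cap K_n}^{K_n}(\tau_I^{\epsilon})$.

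The content of the proposition is then to show that every irreducible $K_n$-subrepresentation $\tau$ of $W$ is atypical. Fix $\epsilon \neq 0$ with $\tau \hookrightarrow \ind_{P_I \cap K_n}^{K_n}(\tau_I^{\epsilon})$ and pick $i_0$ with $\epsilon_{i_0}=1$. On the one hand, $\tau \hookrightarrow \res_{K_n} i_{P_I}^{G_n}(\sigma_I)$ and $i_{P_I}^{G_n}(\sigma_I) \in \mathcal{M}_t(G_n)$, so after passing to an irreducible subquotient $\tau$ occurs in some irreducible $\pi_1$ of $G_n$ with $I(\pi_1) = t$. On the other hand, Frobenius reciprocity yields a non-zero $(P_I \cap K_n)$-map from $\tau$ to some irreducible summand $\boxtimes_{i=1}^r \tau_i^*$ of $\tau_I^{\epsilon}$ with $\tau_i^* \subset \tau_i^{\epsilon_i}$. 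For $i \neq i_0$ pick an irreducible $\rho_i$ of $G_{n_i}$ containing $\tau_i^*$ with $I(\rho_i) = t_i$ (possible since $\tau_i^* \subset \res_{K_{n_i}} \sigma_i$ and $\sigma_i \in \mathcal{M}_{t_i}(G_{n_i})$); for $i = i_0$ the atypicality of $\tau_{i_0}^1$ lets us pick an irreducible $\rho_{i_0}$ of $G_{n_{i_0}}$ containing $\tau_{i_0}^*$ with $I(\rho_{i_0}) \neq t_{i_0}$. Since $\boxtimes_{i=1}^r \tau_i^* \hookrightarrow \res_{M_I(\integers{F})} (\boxtimes_{i=1}^r \rho_i)$, applying Iwasawa again gives $\tau \hookrightarrow \res_{K_n} i_{P_I}^{G_n}(\boxtimes_{i=1}^r \rho_i)$, and $\tau$ then occurs in some irreducible subquotient $\pi_2$ of $i_{P_I}^{G_n}(\boxtimes_{i=1}^r \rho_i)$.

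To finish I would compare $I(\pi_1)$ and $I(\pi_2)$ as unordered multisets of supercuspidal inertial classes on general linear groups: the multiset for $I(\pi_2)$ is the disjoint union, over $i$, of the supercuspidal supports of $\rho_i$, and that for $I(\pi_1) = t$ is the analogous union built from the $t_i$. Equality of these multisets, after cancelling the matching contributions from $i \neq i_0$, would force equality of the supercuspidal supports of $I(\rho_{i_0})$ and $t_{i_0}$ in $G_{n_{i_0}}$ and hence $I(\rho_{i_0}) = t_{i_0}$, a contradiction. Thus $I(\pi_2) \neq I(\pi_1)$ and $\tau$ is atypical. The step that will require the most care is the blanket principle, used twice, that an irreducible $K_n$-subrepresentation of a smooth $G_n$-representation $V$ occurs in some irreducible subquotient of $V$; here $V$ is parabolically induced from a Levi, and finite length of such inductions (immediate when $\boxtimes_{i=1}^r \rho_i$ is irreducible, and reducible to the finite-length case for $i_{P_I}^{G_n}(\sigma_I)$ by restricting to a finitely generated $G_n$-subrepresentation inside the block $\mathcal{M}_t(G_n)$) is what will let us pass to a composition series and then invoke complete reducibility of smooth $K_n$-representations.
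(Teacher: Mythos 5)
Your proof is correct and follows essentially the same route as the paper's: split the complement along tensor factors, use atypicality at a single coordinate to realize the subrepresentation inside a parabolic induction with a different Bernstein component, and separate the two inertial classes by comparing their cuspidal-support multisets. The paper phrases the multiset step as showing a multiplicity discrepancy cannot be removed by adding equal contributions, while you phrase it contrapositively as a cancellation leading to a contradiction; these are the same argument.
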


\begin{proof}
  Any $K_n$-irreducible subrepresentation of
  $\res_{K_n}i_{P_I}^{G_n}(\sigma_I)$ occurs as a subrepresentation of
\begin{equation}\label{equation_prelim_1}
\ind_{P_I\cap K_n}^{K_n}(\boxtimes_{i=1}^{r}\gamma_i)
\end{equation}
where $\gamma_i$ is a $K_{n_i}$-irreducible subrepresentation of
$\sigma_i$. Suppose there exists $N\le r$ such that $\gamma_N$ occurs
in $\tau_N^1$. Thus there exists a component
$t_N'\in \mathcal{B}_{n_N}$ such that $t_N'$ is equal to
$ [M_N', \lambda_N']\neq t_N$ and $\gamma_N$ occurs in the restriction
$\res_{K_N}i_{P_N'}^{\g{n_N}{F}}(\lambda_N')$.  Hence the
representation (\ref{equation_prelim_1}) occurs as a
$K_n$-subrepresentation of
$$i_{P_I}^{G_n}\{i_{P_1}^{\g{n_1}{F}}
(\lambda_1)\boxtimes\dots\boxtimes i_{P_N'}^{\g{n_N}{F}}(\lambda_N')
\boxtimes\dots\boxtimes i_{P_r}^{\g{n_r}{F}}(\lambda_r)\}$$
The inertial support $t'$ of the above representation is
$$[M_1\times\dots\times M_{N}'\times\dots\times M_r, 
\lambda_1\boxtimes\dots\boxtimes\lambda_N'\boxtimes\dots\boxtimes\lambda_r].$$
We may assume that $M_i$ is a standard Levi subgroup for
$1\leq i\leq r$. Now
$$[M_N=\prod_{j=1}^{p}\g{m_j}{F},
\lambda_N=\boxtimes_{j=1}^{p}\zeta_j]\neq 
[M_N'=\prod_{j=1}^{p'}\g{m_j'}{F}, \lambda_N'=\boxtimes_{j=1}^{p'}\zeta_j']$$
implies that there exists a cuspidal component $[\g{m_k}{F},
\zeta_{k}]$ occurring 
in the multi-set 
$$\{[\g{m_1}{F}, \zeta_1], [\g{m_2}{F}, \zeta_2], \dots, [\g{m_p}{F}, \zeta_p]\}$$
which has a different multiplicity in 
$$\{[\g{m_1'}{F}, \zeta_{1}'], [\g{m_2'}{F}, \zeta_{2}'],\dots, [\g{m_{p'}}{F}, \zeta_{p'}']\}.$$
Adding cuspidal components with the same multiplicity to the above two
multi-sets cannot make the multiplicities of the component
$[\g{k}{F}, \zeta_k]$ the same. This shows that $t'\neq t$ and hence
the desired complement is the direct sum of the representations as in
(\ref{equation_prelim_1}) such that $\gamma_i$ occur in $\tau_i^1$ for
some $i\in \{1,2,\dots,r\}$.
\end{proof}

\begin{lemma}\label{lemma_prelim_1}
  Let $t_i=[G_{n_i}, \sigma_i]$ be a Bernstein component for
  $G_{n_i}$ and $\tau_i$ be a typical representation for $t_i$ and let
  $\tau_I$ be the representation
  $\tau_1\boxtimes\tau_2\boxtimes\dots\boxtimes\tau_s$.  The
  representation
$$\ind_{P_I\cap K_n}^{K_n}(\tau_I)$$
admits a complement in $\res_{K_n}i_{P_I}^{G_n}(\sigma_I)$ whose
irreducible sub-representations are atypical.
\end{lemma}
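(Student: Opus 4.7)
The plan is to deduce Lemma \ref{lemma_prelim_1} as a direct specialization of Proposition \ref{proposition_prelim_1}, applied in the case where each Levi $M_i$ equals the full group $G_{n_i}$, so that $\lambda_i=\sigma_i$. The only real content beyond invoking the proposition is to produce, for each $i$, a decomposition of $\res_{K_{n_i}}\sigma_i$ into a typical part and an atypical part.

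First I would observe that $K_{n_i}=\g{n_i}{\integers{F}}$ is a profinite group, so any smooth complex representation of $K_{n_i}$ decomposes as a direct sum of irreducible smooth representations; in particular $\res_{K_{n_i}}\sigma_i$ does. Next I would invoke the theorem of Pa\v{s}k\={u}nas recalled in the excerpt: for the cuspidal inertial class $t_i=[G_{n_i},\sigma_i]$ there is a unique typical representation $\tau_i$, and it occurs with multiplicity one in $\sigma_i$. Using the semisimple decomposition above, I can therefore write
$$\res_{K_{n_i}}\sigma_i \;=\; \tau_i \oplus \tau_i^1,$$
where $\tau_i^1$ is a sum of irreducible $K_{n_i}$-representations, none of them isomorphic to $\tau_i$.

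I would then verify that every irreducible constituent $\rho$ of $\tau_i^1$ is atypical. Since $\rho$ occurs in $\sigma_i\in \mathcal{A}_{n_i}(t_i)$, if $\rho$ were typical for some inertial class $t'$ of $G_{n_i}$ then necessarily $t'=t_i$; but then the uniqueness clause of Pa\v{s}k\={u}nas's theorem would force $\rho\simeq\tau_i$, contradicting the choice of $\tau_i^1$. Thus the decomposition above satisfies the hypothesis of Proposition \ref{proposition_prelim_1} with $\tau_i^0:=\tau_i$. Applying that proposition to the component $t=[M_I,\sigma_I]=s$ produces a complement to $\ind_{P_I\cap K_n}^{K_n}(\tau_I^0)=\ind_{P_I\cap K_n}^{K_n}(\tau_I)$ inside $\res_{K_n}i_{P_I}^{G_n}(\sigma_I)$ whose irreducible subrepresentations are atypical, which is exactly the claim.

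The main, and rather mild, obstacle is the verification that the non-$\tau_i$ summand of $\res_{K_{n_i}}\sigma_i$ consists entirely of atypical representations; this is where one genuinely uses that $\tau_i$ is not merely a typical representation for $t_i$ but the \emph{unique} such. Once that is in hand, Lemma \ref{lemma_prelim_1} is an essentially formal repackaging of Proposition \ref{proposition_prelim_1}.
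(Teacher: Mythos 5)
Your proposal is correct and follows the same route as the paper: decompose $\res_{K_{n_i}}\sigma_i=\tau_i\oplus\tau_i^1$ using Pa\v{s}k\={u}nas's uniqueness theorem (and the multiplicity-one statement) so that every irreducible constituent of $\tau_i^1$ is atypical, then feed this into Proposition \ref{proposition_prelim_1} with $M_i=G_{n_i}$, $\lambda_i=\sigma_i$, $\tau_i^0=\tau_i$. You have simply made explicit the small logical step, left implicit in the paper, that a non-$\tau_i$ constituent occurring in $\sigma_i$ cannot be typical for any inertial class without contradicting uniqueness.
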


\begin{proof}
  We use the uniqueness of typical representations for supercuspidal representations
  (see \cite{Paskunas-uniqueness}) to decompose
  $\res_{K_{n_i}}\sigma_i$ as $\tau_i\oplus \tau_i^1$ such
  that irreducible sub-representations of $\tau^1_i$ are atypical.
  The lemma follows as a consequence of Proposition
  \ref{proposition_prelim_1}.
\end{proof}

Given a component $s=[M_I, \sigma_I]$ of $G_n$ the above lemma
shows that typical representations only occur as sub-representations
of
$$\ind_{P_I\cap K_n}^{K_n}(\tau_I).$$
The above representation is still an infinite dimensional
representation of the compact group $K_n$. We write
the above representation as an increasing union of finite-dimensional
representations.

Let $\{H_i\}_{i\geq 1}$ be a decreasing sequence of compact open
subgroups of the maximal compact subgroup $K_n$. Let
$\bar{U}_I$ be the unipotent radical of the opposite parabolic
subgroup $\bar{P}_I$ of $P_I$ with respect to the Levi subgroup
$M_I$. We assume that $H_i$ has an Iwahori decomposition with
respect to the parabolic subgroup $P_I$ and Levi subgroup $M_I$ for
all $i\geq 1$ i.e. the product map
$$(H_i\cap\bar{U_I})\times(H_i\cap M_I)\times(H_i\cap U_I)\rightarrow H_i$$
is a homeomorphism for any ordering of the factors on the left hand
side and that $\bigcap_{i\geq1}H_i=K_n\cap P_I$.  Let
$\tau$ be a finite dimensional smooth representation of the group
$M_I(\mathcal{O}_F)$. We assume that $\tau$ extends to a
representation of $H_i$ for all $i\geq 1$ such that $H_i\cap U_I$ and
$H_i\cap \bar{U}_I$ are contained in the kernel of $\tau$.  By
definition the representation $\ind_{H_i}^{K_n}(\tau)$
is contained in
$\ind_{K_n\cap P_I}^{K_n}(\tau)$.

\begin{lemma}\label{lemma_prelim_2}
  The union of the
  representations $$\ind_{H_i}^{K_n}(\tau)$$ for all
  $i\geq 1$ is equal to the representation
$$\ind_{K_n\cap P_I}^{K_n}(\tau).$$
\end{lemma}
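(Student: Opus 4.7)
The inclusion $\bigcup_{i} \ind_{H_i}^{K_n}(\tau) \subseteq \ind_{K_n \cap P_I}^{K_n}(\tau)$ is automatic, since a function transforming by $\tau$ under left translation by the larger group $H_i \supseteq K_n \cap P_I$ a fortiori transforms that way under $K_n \cap P_I$. The content of the lemma is the reverse inclusion.

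The plan is to begin by pinning down the Iwahori components of $H_i$. Since each $H_i$ lies in $K_n$ and contains $\bigcap_{j}H_j = K_n \cap P_I$, we get $K_n \cap M_I \subseteq H_i \cap M_I \subseteq K_n \cap M_I$, hence $H_i \cap M_I = K_n \cap M_I$; similarly $H_i \cap U_I = K_n \cap U_I$. Only $H_i \cap \bar{U}_I$ varies, forming a decreasing sequence of compact open subgroups of $\bar{U}_I \cap K_n$ whose intersection equals $\bar{U}_I \cap P_I = \{1\}$. Consequently the Iwahori decomposition collapses to $H_i = (H_i \cap \bar{U}_I)(K_n \cap P_I)$, so verifying $H_i$-equivariance of a section amounts to verifying left-invariance under the shrinking piece $H_i \cap \bar{U}_I$ on top of the already-given $(K_n \cap P_I)$-equivariance.

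Next I would fix $f \in \ind_{K_n \cap P_I}^{K_n}(\tau)$. Smoothness together with compactness of $K_n$ yields a compact open subgroup $K' \subseteq K_n$ with $f(xk') = f(x)$ for all $x \in K_n$, $k' \in K'$. The continuity of conjugation $K_n \times K_n \to K_n$, combined with the compactness of $K_n$, produces an open neighbourhood $N$ of the identity in $K_n$ such that $y^{-1} N y \subseteq K'$ uniformly in $y \in K_n$. Since $H_i \cap \bar{U}_I$ is a decreasing sequence of compact subsets of $K_n$ with intersection $\{1\}$, a standard Cantor-type argument gives $H_i \cap \bar{U}_I \subseteq N$ for all sufficiently large $i$.

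Finally, for such an $i$ I would verify directly that $f \in \ind_{H_i}^{K_n}(\tau)$. Given $h \in H_i$, factor $h = \bar{u} p$ with $\bar{u} \in H_i \cap \bar{U}_I$ and $p \in K_n \cap P_I$. For $x \in K_n$, the identity $\bar{u} p x = p \cdot (p^{-1} \bar{u} p) \cdot x$ lets one apply first the $(K_n \cap P_I)$-equivariance of $f$ under left translation by $p$, and then its right-$K'$-invariance at $x$; the relevant conjugate $x^{-1}(p^{-1} \bar{u} p) x = (px)^{-1}\bar{u}(px)$ belongs to $K'$ by the choice of $N$, since $px \in K_n$ and $\bar{u} \in N$. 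Combined with $\tau(h) = \tau(p)$ (because $\tau$ is trivial on $H_i \cap \bar{U}_I$), this yields $f(hx) = \tau(h) f(x)$ as required. The only slightly delicate point is the uniform conjugation estimate coming from the compactness of $K_n$; the rest is bookkeeping with the Iwahori decomposition.
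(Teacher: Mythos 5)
Your proof is correct and is essentially the same argument as the paper's: use smoothness of $f$ to find a compact open subgroup under which $f$ is right-invariant, choose $i$ large enough that $H_i\cap\bar U_I$ is small relative to that subgroup, and then check $H_i$-equivariance via the Iwahori decomposition $h=\bar u p$ by commuting $\bar u$ past $px$. The only cosmetic difference is that the paper works directly with the principal congruence subgroups $K_n(m)$, which are normal in $K_n$ and so make the conjugation step immediate, whereas you use a general compact open $K'$ and supply the (correct, but strictly unnecessary) uniform conjugation estimate via compactness.
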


\begin{proof}
  Let $W$ be the underlying space for the representation $\tau$. Any
  element $f$ in the space
$$\ind_{K_n\cap P_I}^{K_n}(\tau)$$
is a function $f:K_n\rightarrow W$ such that 
\begin{enumerate}
\item $f(pk)=\tau(p)f(k)$ for all $p\in K_n\cap P_I$ and $k\in K_n,$
\item here exists a positive integer $m$ (depending on $f$) such that
  $f(gk)=f(g)$ for all $k\in K_n(m)$ and $g\in K_n$.
\end{enumerate}
Now there exists a positive integer $i$ such that
$H_i\cap \bar{U}_I \subset K_n(m)$. For such a choice of $i$ and
$h\in H_i$ write $h=h^{-}h^{+}$ where
$h^{+}\in {K_n}\cap P$, $h^{-}\in H_i\cap \bar{U}_I$ which
we can do so by Iwahori decomposition of $H_i$.  We observe that
$f(hk)=f(h^{-}h^{+}k)=f(h^{+}k(h^{+}k)^{-1}h^{-}(h^{+}k))=f(h^{+}k)=\tau(h^{+})f(k)$
(since $(h^{+}k)^{-1}h^{-}(h^{+}k)\in K_n(m)$).  Hence
$f\in \ind_{H_i}^{K_n}(\tau)$.
\end{proof}

We shall need the following technical lemma for frequent
reference. Let $P$ be any parabolic subgroup of $G_n$ with a
Levi subgroup $M$ and $U$ be the unipotent radical of $P$. Let $J_1$
and $J_2$ be two compact open subgroups of $K_n$ such that $J_1$
contains $J_2$.  Suppose $J_1$ and $J_2$ both satisfy Iwahori
decomposition with respect to the Levi subgroup $M$. With
$J_1\cap U=J_2\cap U$ and $J_1\cap \bar{U}=J_2\cap \bar{U}$. Let
$\lambda$ be an irreducible smooth representation of $J_2$ which
admits an Iwahori decomposition i.e. $J_2\cap U$ and $J_2\cap \bar{U}$
are contained in the kernel of $\lambda$.

\begin{lemma}\label{lemma_prelim_3}
  The representation $\ind_{J_2}^{J_1}(\lambda)$ is the extension of
  the representation $\ind_{J_2\cap M}^{J_1\cap M}(\lambda)$ such that
  $J_1\cap U$ and $J_1\cap \bar{U}$ are contained in the kernel of the
  extension.
\end{lemma}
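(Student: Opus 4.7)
The plan is to construct an explicit isomorphism between $\ind_{J_2}^{J_1}(\lambda)$ and the representation of $J_1$ described in the statement, via restriction of functions to $J_1\cap M$.

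First I would set up coset representatives. From $J_1\cap U = J_2\cap U$, $J_1\cap\bar{U} = J_2\cap\bar{U}$, and the Iwahori decomposition of $J_1$, one obtains $J_1 = (J_2\cap\bar{U})(J_1\cap M)(J_2\cap U)$ with uniqueness. Since $J_1\cap M$ normalizes both $J_1\cap U$ and $J_1\cap\bar{U}$ (for $m\in J_1\cap M$, $u\in J_1\cap U$, the conjugate $mum^{-1}$ lies in $J_1\cap U$, being in both $J_1$ and $U$), the inclusion $J_1\cap M\hookrightarrow J_1$ induces a bijection of right cosets
$$(J_2\cap M)\backslash(J_1\cap M)\xrightarrow{\sim} J_2\backslash J_1;$$
for $j=\bar{u}mu\in J_1$ one has $J_2 j = J_2 mu = J_2 m$ using $\bar{u}\in J_2$ and $mum^{-1}\in J_2\cap U$.

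Next I would define
$$\Phi\colon \ind_{J_2}^{J_1}(\lambda)\longrightarrow\ind_{J_2\cap M}^{J_1\cap M}(\lambda|_{J_2\cap M}), \qquad f\mapsto f|_{J_1\cap M},$$
with candidate inverse sending $\tilde f$ to the function $\bar{u}mu\mapsto\tilde f(m)$ (defined thanks to uniqueness of the Iwahori decomposition and triviality of $\lambda$ on $J_2\cap U$, $J_2\cap\bar{U}$). The coset bijection gives that $\Phi$ is a vector-space isomorphism. Transporting the $J_1$-action through $\Phi$, the $J_1\cap M$-action matches $\ind_{J_2\cap M}^{J_1\cap M}(\lambda|_{J_2\cap M})$ by inspection, while for $u_1\in J_1\cap U$ and $x\in J_1\cap M$ one computes $(u_1\cdot f)(x) = f(xu_1) = \lambda(xu_1 x^{-1})f(x) = f(x)$, since $xu_1 x^{-1}\in J_1\cap U = J_2\cap U$ is in the kernel of $\lambda$; the computation for $J_1\cap\bar{U}$ is identical. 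This gives the claimed form of the extension.

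The main technical point is verifying that the inverse of $\Phi$ really lands in $\ind_{J_2}^{J_1}(\lambda)$, i.e., the equivariance $f(j_2 j) = \lambda(j_2)f(j)$ for $j_2\in J_2$ and $j\in J_1$. Writing $j_2 = \bar{v}nv$ and $j = \bar{u}mu$ in Iwahori form and applying the Iwahori decomposition of $J_2$ to the interior product $v\bar{u} = \bar{w}kw$, one picks up a potentially nontrivial cross-term $\lambda(k)$ with $k\in J_2\cap M$. This vanishes because $\lambda$ is trivial on $J_2\cap U$ and $J_2\cap\bar{U}$, so $\lambda(v\bar{u}) = 1$ forces $\lambda(\bar{w}kw) = \lambda(k) = 1$. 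Once this is in place, conjugating the remaining unipotent parts across the $M$-factors (again using the normalization property of $J_1\cap M$) reduces the check to the $M$-component, and the rest is routine Iwahori bookkeeping.
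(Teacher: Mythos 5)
Your proof is correct and follows essentially the same path as the paper: both hinge on the Iwahori decomposition to reduce to coset representatives in $J_1\cap M$, and on the fact that $J_1\cap M$ normalizes the unipotent pieces while $\lambda$ is trivial on them. The paper simply packages the vector-space identification as an application of Mackey decomposition (via $(J_1\cap M)J_2 = J_1$ and $(J_1\cap P)J_2 = J_1$) rather than constructing $\Phi$ and its inverse by hand as you do, but the underlying computation — including the cross-term $\lambda(k)=1$ you check explicitly — is the same.
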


\begin{proof}
  From the Iwahori decomposition we get that $(J_1\cap M)J_2=J_1$ and
  from the Mackey decomposition we get that
$$\res_{J_1\cap M}\ind^{J_1}_{J_2}(\lambda)\simeq\ind_{J_2\cap M}^{J_1\cap M}(\lambda).$$
We now verify that $J_1\cap U$ and $J_1\cap \bar{U}$ act trivially on
$\ind_{J_2}^{J_1}(\lambda)$. Observe that
$$\res_{J_1\cap P}\ind^{J_1}_{J_2}(\lambda)\simeq\ind_{J_2\cap P}^{J_1\cap P}(\lambda).$$
Since the double coset representatives for 
$$\dfrac{J_1\cap P}{J_2\cap P}$$
can be chosen from $M\cap J_1$ the group $J_1\cap U$ acts trivially on
$\ind_{J_2}^{J_1}(\lambda)$. Similarly $J_1\cap \bar{U}$ acts
trivially on $\ind_{J_2}^{J_1}(\lambda)$. This concludes the lemma.
\end{proof}

\begin{lemma}\label{lemma_prelim_4}
  Let $G$ be the $F$-rational points of an algebraic reductive group and
  $\chi$ be a character of $G$. Let $\tau$ be a $K$-typical
  representation for the component $s=[M, \sigma]$.  The representation
  $\tau\otimes\chi$ is a typical representation for the component
  $[M, \sigma\otimes\chi]$.
\end{lemma}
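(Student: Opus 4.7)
The plan is to argue purely by unwinding definitions: the operation of tensoring with a character $\chi$ of $G$ is invertible (its inverse is tensoring by $\chi^{-1}$) and commutes with every construction used to set up Bernstein components, so it transports typicality for $s$ to typicality for $s\otimes\chi$.

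Concretely, I would start with an irreducible smooth representation $\pi$ of $G$ such that $\ho_K(\tau\otimes\chi,\pi)\ne 0$, and show $\pi\in\mathcal{A}_G([M,\sigma\otimes\chi])$. The first step is the natural isomorphism
\[
\ho_K(\tau\otimes\chi,\pi)\;\cong\;\ho_K(\tau,\pi\otimes\chi^{-1}),
\]
obtained by sending a map $\phi$ to itself as a linear map: if $\phi(\tau(k)v)\chi(k)=\pi(k)\phi(v)$ for all $k\in K$, then viewing the target as $\pi\otimes\chi^{-1}$ gives $\phi(\tau(k)v)=\chi(k)^{-1}\pi(k)\phi(v)$, which is exactly $K$-equivariance for $\tau\to\pi\otimes\chi^{-1}$. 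Since $\chi$ is a smooth character of $G$, the representation $\pi\otimes\chi^{-1}$ is again irreducible smooth.

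The second step applies the hypothesis: the nonvanishing of $\ho_K(\tau,\pi\otimes\chi^{-1})$ and the fact that $\tau$ is $K$-typical for $s=[M,\sigma]$ imply
\[
\pi\otimes\chi^{-1}\in\mathcal{A}_G([M,\sigma]).
\]
The third step is to transport this back via $\chi$. If $\pi\otimes\chi^{-1}$ appears as a subquotient of $i_P^G(\sigma\otimes\psi)$ for some unramified character $\psi$ of $M$, then tensoring the whole parabolic induction with $\chi$ and using the identity $i_P^G(\eta)\otimes\chi\cong i_P^G(\eta\otimes\chi|_M)$ (valid because $\chi$ factors through the Levi quotient of $P$) shows that $\pi\cong(\pi\otimes\chi^{-1})\otimes\chi$ is a subquotient of $i_P^G\bigl((\sigma\otimes\chi|_M)\otimes\psi\bigr)$. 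Hence $\pi$ has inertial support $[M,\sigma\otimes\chi|_M]=[M,\sigma\otimes\chi]$, as required.

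There is essentially no obstacle here; the lemma is a compatibility statement. The only thing to be slightly careful about is bookkeeping for where $\chi$ is restricted: the character is defined on $G$, but it enters the pair $(M,\sigma\otimes\chi)$ through its restriction to $M$, and enters the Hom calculation through its restriction to $K$. Both restrictions are smooth, and the identity $i_P^G(\eta)\otimes\chi\cong i_P^G(\eta\otimes\chi|_M)$ ensures that the twist by $\chi$ induces an equivalence between the Bernstein blocks $\mathcal{M}_{[M,\sigma]}(G)$ and $\mathcal{M}_{[M,\sigma\otimes\chi]}(G)$, which is what makes the last step work.
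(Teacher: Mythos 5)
Your proof is correct and follows exactly the same route as the paper: twist by $\chi^{-1}$ via the natural isomorphism $\ho_K(\tau\otimes\chi,\pi)\cong\ho_K(\tau,\pi\otimes\chi^{-1})$, invoke typicality of $\tau$ for $[M,\sigma]$ to place $\pi\otimes\chi^{-1}$ in $\mathcal{A}_G([M,\sigma])$, then twist back using the compatibility of parabolic induction with twisting by characters of $G$. The only difference is that you spell out a few routine points (smoothness and irreducibility of the twist, the identity $i_P^G(\eta)\otimes\chi\cong i_P^G(\eta\otimes\chi|_M)$, and which restriction of $\chi$ enters where) that the paper leaves implicit.
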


\begin{proof}
  Let $\ho_{K}(\tau\otimes\chi, \pi)\neq 0$ for some irreducible
  smooth representation $\pi$ of $G$. We now have
  $\ho_{K}(\tau, \pi\otimes\chi^{-1})\neq 0$. This implies that
  $\pi\otimes\chi^{-1}$ occurs in the composition series of
$$i_{P}^{G}(\sigma\otimes\eta)$$
for some parabolic subgroup $P$ containing $M$ as a Levi subgroup
and $\eta$ an unramified character of $M$. 
Now $\pi$ occurs in the
composition series for the representation
$$i_P^G(\sigma\otimes\chi\otimes\eta)$$
hence $\tau\otimes\chi$ is a $K$-typical representation for the
component $[M, \sigma\otimes\chi]$.
\end{proof}

\section{Level-Zero Bernstein components}
\begin{definition}
  Let $I=(n_1,n_2,\dots,n_r)$ be an ordered partition of $n$. An
  inertial class $s=[M_I, \boxtimes_{i=1}^{r}\sigma_i]$ is called a
  level-zero inertial class if the $K_{n_i}(1)$ invariants of
  $\sigma_i$ is non trivial, for $1\leq i\leq r$.  
\end{definition}

We fix a level-zero inertial class $s=[M_I, \sigma_I]$. The subgroup
$K_{n_i}$ acts on the $K_{n_i}(1)$ invariants of $\sigma_i$ and
$\tau_i$ be this representation of $K_{n_i}$ on $K_{n_i}(1)$
invariants of $\sigma_i$. The representation is $\tau_i$ is the
inflation of a cuspidal representation of $\g{n_i}{k_F}$. {\bf The
  pair $(K_{n_i}, \tau_i)$ is the Bushnell--Kutzko type
  for the inertial class $[G_{n_i}, \sigma_i]$}.  

Let $m$ be a positive integer and $P_I(m)$ be the inverse image of
$P_I(\mathcal{O}_F/\mathfrak{P}_F^m)$ under the mod-$\mathfrak{P}_F^m$
reduction map
$$\pi_m:K_n\rightarrow\g{n}{\mathcal{O}_F/\mathfrak{P}_F^m}.$$ 
The representation $\boxtimes_{i=1}^{r}\tau_i$ of $M_I(k_F)$ can be
viewed as a representation of $P_I(k_F)$ by inflation via the quotient
map
$$P_I(k_F)\rightarrow P_I(k_F)/U_I(k_F)\simeq M_I(k_F).$$
The representation $\boxtimes_{i=1}^{r}\tau_i$ of $P_I(k_F)$ is also
 a representation of $P_I(1)$ by inflation via the map $\pi_1$. We
note that $P_I(1)\cap U_I$ and $P_I(1)\cap \bar{U}_I$ are contained in
the kernel of this extension.  {\bf The pair $(P_I(1), \tau_I)$ is the
Bushnell--Kutzko type for the component $s$} (see
\cite [Section 8.3.1]{Bushnell-kutzko-Semisimpletypes}). The
irreducible sub-representations of
$$\ind_{P_I(1)}^{K_n}(\tau_I)$$
are thus typical for $s$.

We note that the groups $P_I(m)$ have Iwahori decomposition with
respect to $P_I$ and $M_I$.  The representation $\tau_I$ of
$M_I(\integers{F})$ extends to a representation of $P_I(m)$ such that
$P_I(m)\cap U_I$ and $P_i(m)\cap \bar{U}_I$ are contained in the
kernel of the extension. This shows that the sequence of groups
$\{P_I(m)\ | \ m\geq 1\}$ and $\tau_I$ satisfy the hypothesis for the
groups $\{H_m\ | m\geq 1\}$ and $\tau$ in Lemma \ref{lemma_prelim_2}
hence we have the isomorphism
$$\bigcup_{m\geq 1} \ind_{P_I(m)}^{K_n}(\tau_I)\simeq \ind_{P_I\cap K_n}^{K_n}(\tau_I).$$
We recall that the Lemma \ref{lemma_prelim_1} shows that typical
representations for the component $s$ can only occur in the above
representation.

Using Frobenius reciprocity we get that the representation $\tau_I$
occurs in $\ind_{P_I(m)}^{P_I(1)}(\tau_I)$ with multiplicity one. Let
$m\geq 1$ and $U_m^0(\tau_I)$ be the $P_I(1)$-stable complement of the
representation $\tau_I$ in $\ind_{P_I(m)}^{P_I(1)}(\tau_I)$. Let
$U_m(\tau_I)$ be the representation
$$\ind_{P_I(1)}^{K_n}(U_m^0(\tau_I)).$$
We note that 
$$\ind_{P_I(1)}^{K_n}(\tau_I)\oplus U_m(\tau_I)\simeq \ind_{P_I(m)}^{K_n}(\tau_I) $$
We will show that irreducible sub-representations of $U_m(\tau_I)$ are
atypical.

\begin{theorem}[Main]\label{theorem_level_1}
  Let $m\geq 1$.  The $K_n$-irreducible
  subrepresentations of $U_m(\tau_I)$ are atypical.
\end{theorem}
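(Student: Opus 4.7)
Let $\rho\hookrightarrow U_m(\tau_I)$ be an irreducible $K_n$-subrepresentation. I aim to exhibit an irreducible smooth representation $\pi'$ of $G_n$ with inertial support $s'\neq s$ and $\ho_{K_n}(\rho,\pi')\neq 0$; this will show $\rho$ is atypical. I plan to take $\pi'$ to be an irreducible subquotient of $i_{P_I}^{G_n}(\sigma')$ for a supercuspidal $\sigma'=\sigma_I\otimes\chi$ of $M_I$, where $\chi$ is a ramified character of $M_I$ of positive conductor (so that $[M_I,\sigma_I\otimes\chi]\neq s$). By Iwasawa decomposition and Lemma \ref{lemma_prelim_2}, it is then enough to produce a $K_n$-embedding $\rho\hookrightarrow\ind_{P_I\cap K_n}^{K_n}(\sigma')$.

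I would proceed by induction on $m$, the case $m=1$ being vacuous as $U_1(\tau_I)=0$. For the induction step I first analyse $\ind_{P_I(m)}^{P_I(1)}(\tau_I)$ as a $P_I(1)$-module. The Iwahori decompositions together with the equalities $P_I(k)\cap M_I=M_I(\integers{F})$ and $P_I(k)\cap U_I=K_n\cap U_I$ (both independent of $k\geq 1$) and $P_I(k)\cap\bar U_I=\bar U_I\cap K_n(k)$ (strictly shrinking with $k$) identify the cosets $P_I(m)\backslash P_I(1)$ with $(\bar U_I\cap K_n(m))\backslash(\bar U_I\cap K_n(1))$, giving an $M_I(\integers{F})$-equivariant isomorphism of $\ind_{P_I(m)}^{P_I(1)}(\tau_I)$ with the tensor product of $\tau_I$ and the conjugation-permutation representation of $M_I(\integers{F})$ on the coset space $(\bar U_I\cap K_n(1))/(\bar U_I\cap K_n(m))$. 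The copy of $\tau_I$ corresponds to the constant functions, so $U_m^0(\tau_I)$ is supported on the non-constant part. Frobenius reciprocity turns the inclusion $\rho\hookrightarrow\ind_{P_I(1)}^{K_n}(U_m^0(\tau_I))$ into a non-zero $P_I(1)$-map $\rho\to U_m^0(\tau_I)$; by the induction hypothesis I may assume its image meets the outermost filtration layer, i.e.\ hits the part coming from $(\bar U_I\cap K_n(m-1))/(\bar U_I\cap K_n(m))$. From the weight decomposition of this abelian $k_F$-vector space under $M_I(\integers{F})$-conjugation, I would extract a ramified character $\chi$ of $M_I$ of conductor exactly $m$ (obtained from a single negative-root weight, extended through the determinants on each block $G_{n_i}$) so that $\tau_I\otimes\chi|_{M_I(\integers{F})}$ meets the image of $\rho$. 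Setting $\sigma'=\sigma_I\otimes\chi$, the restriction $\sigma'|_{M_I(\integers{F})}$ realises $\tau_I\otimes\chi|_{M_I(\integers{F})}$ on its $K_n(m)\cap M_I$-fixed vectors, and matching the isotypes via the Iwahori decomposition and Lemma \ref{lemma_prelim_2} yields the required embedding $\rho\hookrightarrow\ind_{P_I\cap K_n}^{K_n}(\sigma')\simeq\res_{K_n}i_{P_I}^{G_n}(\sigma')$.

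The main obstacle is the extraction of $\chi$: the $M_I(\integers{F})$-action on the filtration layers is conjugation on an abelian unipotent quotient, a priori unrelated to characters of $M_I$ itself. The bridge is a root-subspace decomposition of $(\bar U_I\cap K_n(k))/(\bar U_I\cap K_n(k+1))\cong \bar{\mathfrak{u}}_I\otimes(\ideal{F}^k/\ideal{F}^{k+1})$ under the central torus of $M_I$, combined with local duality that matches each negative-root weight space with a character of the central torus of $M_I$ of depth exactly $k$; each such character extends through the determinants on the blocks to a character of $M_I$. A further subtlety occurs when some block $G_{n_i}$ with $n_i\geq 2$ contributes non-abelian weight spaces to the conjugation action: in those cases the naive character twist must be replaced by a twist along a proper refinement $M_{I'}\subsetneq M_I$ of the Levi, so that the offending block is itself decomposed before the argument is applied. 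Unifying the character-twist and Levi-refinement cases is where the bulk of the technical work is expected to lie.
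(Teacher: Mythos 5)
Your strategy is a genuinely different route from the paper's, and unfortunately it breaks down at exactly the point you flag. The paper does \emph{not} try to change the supercuspidal on the same Levi $M_I$ by a ramified twist; it changes the \emph{Levi}. Concretely, in the case $n_r>1$ the paper applies Clifford theory to $\ind_{P_I(1,m+1)}^{P_I(1,m)}(\id)$, parametrises the non-trivial orbit characters $\eta$ by non-zero matrices $A\in M_{(n-n_r)\times n_r}(k_F)$, uses Lemma \ref{lemma_level_4} to show the stabiliser $Z(\eta)\cap M_I(k_F)$ avoids a unipotent subgroup $U\subset M_I(k_F)$, and then applies Pa\v{s}k\={u}nas's Lemma \ref{lemma_level_5_6} to realise the relevant component of $\tau_I$ inside a \emph{non-cuspidal} representation $\tau_I'$ of $M_I(k_F)$. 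Inflating to $\sigma_I'$, the induced representation $i_{P_I}^{G_n}(\sigma_I')$ has inertial support strictly finer than $[M_I,\sigma_I]$. The case $n_r=1$ is treated separately and needs a harder argument (Bernstein--Zelevinsky derivatives over $k_F$, Casselman's description of $\res_{\g{2}{\integers{F}}}\pi$, and the existence of level-zero cuspidals of $\g{2}{F}$ with prescribed central character), which your proposal does not address.

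The gap in your version is the ``extraction of $\chi$''. The filtration layer $(\bar U_I\cap K_n(m-1))/(\bar U_I\cap K_n(m))$ carries the $M_I(\integers{F})$-conjugation (adjoint) action, and as an $M_I(k_F)$-module it is $\bigoplus_{i>j}\operatorname{Hom}(k_F^{n_i},k_F^{n_j})$. Each summand is irreducible of the form $\operatorname{std}_j\otimes\operatorname{std}_i^{\vee}$, and it contains no one-dimensional $M_I(k_F)$-subrepresentation unless $n_i=n_j=1$. Thus no character of $M_I$ ``shows up'' as a weight once some block has size $\geq 2$, and your plan to extend a central-torus weight through the block determinants to a character of $M_I$ fails: the full-Levi stabiliser of a chosen root vector is the kernel of no such character — it is precisely a proper parahoric-type subgroup, and this observation \emph{is} the content of Lemma \ref{lemma_level_4}. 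Your proposed ``Levi refinement'' fallback is where all the actual difficulty lives, and making it rigorous essentially reproduces the paper's argument via Lemma \ref{lemma_level_5_6}. Note also that for $r\geq 3$ the group $\bar U_I$ is not abelian, so the quotient you want to decompose into weight spaces is not even an abelian vector space in a useful sense unless one first fixes a maximal-parabolic step $(n-n_r,n_r)$ — this is why the paper introduces the groups $P_I(1,m)$ and $K_I(m)$. Finally, even in the torus case $n_r=1$ where the character $\chi$ exists, two further claims in your sketch are unjustified: $[T_n,\sigma_I\otimes\chi]\neq[T_n,\sigma_I]$ is not automatic (twisting by $\chi$ could merely permute the inertial classes $[G_1,\chi_i]$), and $\rho\hookrightarrow\res_{K_n}i_{B_n}^{G_n}(\sigma_I\otimes\chi)$ does not follow from $\tau_I\otimes\chi$ appearing in the image of $\rho\to U_m^0(\tau_I)$, since the $K_n(m)\cap T_n$-invariants of $\sigma_I\otimes\chi$ restricted to $T_n(\integers{F})$ are much larger than $\tau_I\otimes\chi$ and one has to match the isotype through the Clifford-theoretic piece $U_\eta$, not just through the $M_I$-factor.
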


Using this, the classification of typical representations for the inertial class
$s$ is given by the following corollary.

\begin{corollary}\label{levelzero_coro}
  The irreducible sub-representations of
  $\ind_{P_I(1)}^{K_n}(\tau_I)$ are precisely the
  typical representations for the level-zero inertial class
  $[M_I, \sigma_I]$. Moreover if $\Gamma$ is a typical representation
  then
$$\dim_{\mathbb{C}}\ \ho_{K_n}(\Gamma, \ind_{P_I(1)}^{K_n}(\tau_I))= 
\dim_{\mathbb{C}}\ \ho_{K_n}(\Gamma, i_{P_I}^{G_n}(\sigma_I)).$$
\end{corollary}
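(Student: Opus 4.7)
The plan is to deduce the corollary directly from Theorem \ref{theorem_level_1} together with the preparatory lemmas; no new ingredient is needed.

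For the classification, take first $\Gamma$ to be an irreducible subrepresentation of $\ind_{P_I(1)}^{K_n}(\tau_I)$. This induced representation is finite-dimensional (since $[K_n:P_I(1)]<\infty$), hence semisimple as a representation of the compact group $K_n$, so $\Gamma$ is in fact a direct summand. Any nonzero map $\Gamma\to\pi$ therefore extends by zero to a nonzero map $\ind_{P_I(1)}^{K_n}(\tau_I)\to\pi$, and Frobenius reciprocity combined with the Bushnell--Kutzko property of $(P_I(1),\tau_I)$ forces $\pi\in\mathcal{A}_n(s)$. Conversely, let $\Gamma$ be a typical representation for $s$. As explained in the discussion above Lemma \ref{lemma_prelim_1}, $\Gamma$ occurs as a subrepresentation of $\res_{K_n}i_{P_I}^{G_n}(\sigma_I)$; Lemma \ref{lemma_prelim_1} then embeds $\Gamma$ into the typical summand $\ind_{P_I\cap K_n}^{K_n}(\tau_I)=\bigcup_{m\ge 1}\ind_{P_I(m)}^{K_n}(\tau_I)$. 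Since $\Gamma$ is finite-dimensional, any such embedding factors through some $\ind_{P_I(m)}^{K_n}(\tau_I)=\ind_{P_I(1)}^{K_n}(\tau_I)\oplus U_m(\tau_I)$; Theorem \ref{theorem_level_1} rules out a nonzero projection of $\Gamma$ onto $U_m(\tau_I)$, so $\Gamma$ lands inside $\ind_{P_I(1)}^{K_n}(\tau_I)$.

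For the multiplicity identity I would chain three equalities. Iwasawa gives $\res_{K_n}i_{P_I}^{G_n}(\sigma_I)\simeq\ind_{P_I\cap K_n}^{K_n}(\sigma_I)$; Lemma \ref{lemma_prelim_1} splits the latter as $\ind_{P_I\cap K_n}^{K_n}(\tau_I)\oplus V$ with $V$ having only atypical irreducible subrepresentations, so $\ho_{K_n}(\Gamma,V)=0$ (otherwise the image of a nonzero map would exhibit the typical $\Gamma$ as an atypical subrepresentation of $V$). This gives $\ho_{K_n}(\Gamma,i_{P_I}^{G_n}(\sigma_I))=\ho_{K_n}(\Gamma,\ind_{P_I\cap K_n}^{K_n}(\tau_I))$. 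The same filtered-union reasoning as above lets every $K_n$-map from $\Gamma$ into the right-hand side factor through some $\ind_{P_I(m)}^{K_n}(\tau_I)=\ind_{P_I(1)}^{K_n}(\tau_I)\oplus U_m(\tau_I)$, and Theorem \ref{theorem_level_1} again kills the $U_m$-component, so $\ho_{K_n}(\Gamma,\ind_{P_I(m)}^{K_n}(\tau_I))=\ho_{K_n}(\Gamma,\ind_{P_I(1)}^{K_n}(\tau_I))$ for every $m$. The resulting colimit of constant terms is exactly $\ho_{K_n}(\Gamma,\ind_{P_I(1)}^{K_n}(\tau_I))$, yielding the asserted equality of dimensions.

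I do not anticipate any real obstacle here: every step is a formal consequence of Theorem \ref{theorem_level_1} together with the atypicality results already in place. The only point worth flagging is the reduction of Hom-groups from the infinite-dimensional filtered union to a single finite level, which relies only on the fact that an irreducible smooth representation of the compact group $K_n$ is finite-dimensional, plus exactness of induction from open subgroups.
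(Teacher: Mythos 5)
Your proof is correct and follows essentially the same route as the paper's: the BK-type property of $(P_I(1),\tau_I)$ together with Frobenius reciprocity for one direction, and Lemma \ref{lemma_prelim_1} plus Theorem \ref{theorem_level_1} (via the filtered union $\ind_{P_I\cap K_n}^{K_n}(\tau_I)=\bigcup_m\ind_{P_I(m)}^{K_n}(\tau_I)$) for the other direction and the multiplicity statement. You simply spell out the finite-dimensionality and direct-summand reductions that the paper leaves implicit.
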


\begin{proof}
  Given a typical representation $\Gamma$ for the inertial class $s$,
  the theorem shows that $\Gamma$ is a sub-representation of
  $\ind_{P_I(1)}^{K_n}(\tau_I)$ and the multiplicity
  formula follows from Lemma \ref{lemma_prelim_1} and the above
  theorem. Conversely if $\Gamma$ is a sub-representation of
  $\ind_{P_I(1)}^{K_n}(\tau_I)$ then, by Frobenius
  reciprocity, we get that $\ho_{P_I(1)}(\tau_I, \Gamma)\neq 0$. If
  $\Gamma$ is contained as a $K_n$-irreducible
  sub-representation in an irreducible smooth representation $\pi$ of
  $G_n$ then the restriction of $\pi$ to $P_I(1)$ contains the
  representation $\tau_I$.  The pair $(P_I(1), \tau_I)$ is the
  Bushnell--Kutzko type for the inertial class $s=[M_I, \sigma_I]$
  hence the inertial support of $\pi$ is $s$. Hence $\Gamma$ is a
  typical representation and this proves the corollary.
\end{proof}

\subsection{Decomposition of an auxiliary representation}
We will need a few lemmas regarding the splitting of a certain
representation for the proof of the main theorem. Let $I$ be the
ordered partition $(n_1, n_2,\dots, n_r)$ of the positive integer $n$
as fixed at the beginning of this chapter.  {\bf Until the beginning
  of the section \ref{section_chapter1_1} we assume that $r>1$} in
other words $M_I$ is a proper Levi subgroup.  We denote by $I'$ the
ordered partition $(n_1,n_2,\dots,n_{r-1})$ of $n-n_r$. Let $m$ be a
positive integer and $P_I(1,m)$ be the following set
$$\left\{ \begin{pmatrix}A&B\\\varpi_F^mC&D\end{pmatrix}| 
A\in P_{I'}(1); B^{tr}, C\in M_{n_r\times(n-n_r)}(\integers{F}); D\in K_{n_r}\right\}.$$
Here $tr$ denotes transpose. Note that $P_I(1,1)=P_I(1)$.

\begin{lemma}
The set $P_I(1,m)$ is a subgroup of $P_I(1)$. 
\end{lemma}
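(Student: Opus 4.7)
The plan is to verify the subgroup axioms for $P_I(1,m)$ by direct block-matrix computation with respect to the coarser two-block partition $(n-n_r,n_r)$, rather than the full refinement by $I$. First I would check that $P_I(1,m)$ is actually contained in $P_I(1)$: reducing $\begin{pmatrix} A & B \\ \varpi_F^m C & D\end{pmatrix}$ modulo $\mathfrak{P}_F$ yields $\begin{pmatrix} \bar A & \bar B \\ 0 & \bar D\end{pmatrix}$, which is block upper triangular of type $I$ because $\bar A$ is already block upper triangular of type $I'$; this simultaneously confirms invertibility in $K_n$, since $\det\bar A\cdot\det\bar D\neq 0$.

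For closure under multiplication I would expand $g_1 g_2$ block by block. The off-diagonal blocks $A_1 B_2+B_1 D_2$ and $\varpi_F^m(C_1 A_2+D_1 C_2)$ satisfy the required integrality conditions trivially. The diagonal blocks $A_1 A_2+\varpi_F^m B_1 C_2$ and $D_1 D_2+\varpi_F^m C_1 B_2$ have perturbation terms with entries in $\mathfrak{P}_F^m\subseteq\mathfrak{P}_F$, so their reductions modulo $\mathfrak{P}_F$ agree with those of $A_1 A_2$ and $D_1 D_2$ respectively; this keeps the upper-left block in $P_{I'}(1)$ and the lower-right block in $K_{n_r}$.

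For closure under inversion I would employ the Iwahori-type factorization
\[
\begin{pmatrix} A & B \\ \varpi_F^m C & D \end{pmatrix} = \begin{pmatrix} I & 0 \\ \varpi_F^m CA^{-1} & I \end{pmatrix}\begin{pmatrix} A & 0 \\ 0 & E \end{pmatrix}\begin{pmatrix} I & A^{-1} B \\ 0 & I \end{pmatrix},
\]
where $E=D-\varpi_F^m CA^{-1}B\equiv D\pmod{\mathfrak{P}_F^m}$, so $E\in K_{n_r}$. Each of the three factors visibly lies in $P_I(1,m)$ and is trivially inverted there (negate the off-diagonal block for the two unipotent factors, invert block-wise for the Levi factor, using that $P_{I'}(1)$ is already known to be a group). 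The inverse of the original element is then the product of these three inverses in the reversed order, which lies in $P_I(1,m)$ by the closure under multiplication just established.

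The only point that really requires attention is that the upper-left block of a product must land in the congruence subgroup $P_{I'}(1)$ and not merely in $K_{n-n_r}$; this is where the hypothesis $m\geq 1$ enters, and is handled by the observation that $P_{I'}(1)$ is defined by a condition on the reduction modulo $\mathfrak{P}_F$ and is therefore stable under addition of any matrix with entries in $\mathfrak{P}_F$, which is exactly what the cross-terms of the form $\varpi_F^m BC$ contribute.
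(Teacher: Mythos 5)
Your proof is correct, but it takes a genuinely different route from the paper. You verify the subgroup axioms directly by block-matrix computation: containment in $P_I(1)$ and closure under multiplication follow from reducing modulo $\mathfrak{P}_F$ (using that cross-terms like $\varpi_F^m B_1 C_2$ vanish there), and closure under inversion is handled by the Iwahori-type factorization of an element of $P_I(1,m)$ into a lower-unipotent, block-diagonal, and upper-unipotent part, each of which visibly lies in $P_I(1,m)$ and is trivially inverted there. The paper instead identifies $P_I(1,m)$ as the $K_n$-stabilizer of a lattice (when $r-1=1$) or of a finite family of lattices $L_k$ interpolating between $\mathcal{O}_F^n$ and the partition data (when $r-1>1$), so the subgroup property is automatic without any computation. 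The paper's stabilizer argument is shorter and situates $P_I(1,m)$ inside the familiar geometry of parahoric/lattice-chain stabilizers, which is conceptually clarifying; your computational argument is more elementary and self-contained, and as a byproduct exhibits the Iwahori decomposition of $P_I(1,m)$ with respect to $P_{(n-n_r,n_r)}$ explicitly, a structural fact the paper uses repeatedly later but proves separately. Both proofs are valid; the key nontrivial observations in yours -- that $P_{I'}(1)$ is stable under perturbation by entries in $\mathfrak{P}_F$, and that the Schur complement $E=D-\varpi_F^m CA^{-1}B$ stays in $K_{n_r}$ -- are handled correctly.
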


\begin{proof}
  The group $K_n$ acts on the set of lattices of $F^n$
  contained in the lattice $\integers{F}^n$. If $r-1=1$ the set
  $P_I(1,m)$ is the $K_n$-stabilizer of the lattice
  $(\integers{F})^{n_1}\oplus(\varpi_F^m\integers{F})^{n_2}$. In the
  case $r-1>1$ the set $P_I(1,m)$ is the
  $K_n$-stabilizer of the set of  lattices $\{L_k \mid 1<k\le r-1\}$ 
  defined by:
$$L_k=(\integers{F})^{n_1}\oplus \dots\oplus (\integers{F})^{n_{k-1}}
\oplus (\varpi_F\integers{F})^{n_k}\oplus\dots\oplus
(\varpi_F\integers{F})^{n_{r-1}}
\oplus(\varpi_F^m\integers{F})^{n_r}.$$
This shows that $P_I(1,m)$ is a subgroup and is contained in $P_I(1)$
from the definition.
\end{proof}

The structure of the representation 
$$\ind_{P_I(1,m+1)}^{P_I(1,m)}(\id)$$
will be used in the proof of the main theorem. Using Clifford theory
we decompose the above representation. Let $K_I(m)$ be the group
$K_n(m)U_{(n-n_r, n_r)}(\integers{F})$. We note that this group only
depends on $n$ and $n_r$, rather than the whole partition $I$. 

\begin{lemma}
  The group $K_I(m)$ is a normal subgroup of $P_I(1,m)$ and
  $K_I(m)\cap P_I(1,m+1)$ is a normal subgroup of $K_I(m)$.
\end{lemma}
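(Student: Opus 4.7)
For part (1), my plan is to find a larger ambient subgroup of $K_n$ that contains $P_I(1,m)$ and transparently normalizes $K_I(m)$. The natural candidate is $H := P_{(n-n_r,n_r)}(\mathcal{O}_F) \cdot K_n(m)$, which is a subgroup of $K_n$ since $K_n(m)$ is normal in $K_n$. To show $P_I(1,m) \subseteq H$, I would take a general element $g = \begin{pmatrix} A & B \\ \varpi_F^m C & D \end{pmatrix}$ of $P_I(1,m)$ and factor it as $g = \begin{pmatrix} A & B \\ 0 & D \end{pmatrix} \cdot k$; a short direct block computation (using that $A \in P_{I'}(1)$ and $D \in K_{n_r}$ are invertible over $\mathcal{O}_F$) shows that $k$ equals $I + \varpi_F^m$ times a matrix with entries in $\mathcal{O}_F$, hence $k \in K_n(m)$.

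Having established $P_I(1,m) \subseteq H$, I would then show $H$ normalizes $K_I(m) = K_n(m)\cdot U_{(n-n_r,n_r)}(\mathcal{O}_F)$. The parabolic $P_{(n-n_r,n_r)}(\mathcal{O}_F)$ normalizes its own unipotent radical $U_{(n-n_r,n_r)}(\mathcal{O}_F)$ and also normalizes $K_n(m)$ (which is already normal in $K_n$), hence normalizes the product $K_I(m)$. For the second factor: given $k_0 \in K_n(m)$ and $u \in U_{(n-n_r,n_r)}(\mathcal{O}_F)$, the commutator $[k_0,u]$ lies in $K_n(m)$ by normality in $K_n$, so $k_0 u k_0^{-1} = [k_0,u]\,u \in K_I(m)$. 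Thus $K_n(m)$ normalizes $K_I(m)$ as well, and hence so does $H$, and in particular so does $P_I(1,m)$.

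For part (2), the plan is to realize $K_I(m) \cap P_I(1, m+1)$ as the kernel of an explicit group homomorphism. Define
$$\phi : K_I(m) \longrightarrow M_{n_r \times (n-n_r)}(k_F), \qquad \begin{pmatrix} A & B \\ \varpi_F^m C & D \end{pmatrix} \longmapsto C \bmod \varpi_F,$$
with target the additive group. Computing the $(2,1)$-block of a product $g_1 g_2$ inside $K_I(m)$ gives $\varpi_F^m(C_1 A_2 + D_1 C_2)$, and since every element of $K_I(m)$ has diagonal blocks congruent to the identity modulo $\varpi_F$, this reduces to $\varpi_F^m(C_1 + C_2) \bmod \varpi_F^{m+1}$. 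So $\phi$ is a homomorphism, and by construction $\ker \phi = K_I(m) \cap P_I(1, m+1)$, which is therefore normal.

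The main obstacles are minor bookkeeping ones: in (1), verifying that the explicit factorization of $g$ really lands in $K_n(m)$ (i.e.\ that $A^{-1}$ and $D^{-1}$ have integral entries and that the off-diagonal correction terms are integral); and in (2), correctly tracking that only the diagonal blocks being congruent to $I$ modulo $\varpi_F$ (rather than modulo $\varpi_F^m$) is what is needed for the cross term to drop out. Once the right ambient subgroup in (1) and the right homomorphism in (2) are identified, both assertions become formal.
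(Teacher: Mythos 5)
Your proof is correct, and both halves take a genuinely different route from the paper, which uses the Iwahori decomposition of $K_I(m)$ and $P_I(1,m)$ with respect to $M_{(n-n_r,n_r)}$ throughout. For normality of $K_I(m)$ in $P_I(1,m)$, the paper shows each Iwahori factor of $P_I(1,m)$ separately normalizes $K_I(m)$ (the $U$- and $\bar U$-parts because they already lie inside $K_I(m)$, the $M$-part because $M(\integers{F})$ normalizes each of the two factors $K_n(m)$ and $U_{(n-n_r,n_r)}(\integers{F})$); your ambient group $H = P_{(n-n_r,n_r)}(\integers{F})K_n(m)$ bypasses the Iwahori decomposition entirely and in fact shows something a bit stronger, namely that the full group $H\supset P_I(1,m)$ normalizes $K_I(m)$. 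The factorization $g = \left(\begin{smallmatrix}A & B\\ 0 & D\end{smallmatrix}\right)k$ you use to see $P_I(1,m)\subseteq H$ does check out: the correction factor is $I$ plus $\varpi_F^m$ times an integral matrix. For the second assertion the paper goes element by element, writing a general element of $K_I(m)$ in Iwahori coordinates and computing commutators like $u^-j(u^-)^{-1}$ and $u^-u^+(u^-)^{-1}$ block by block to land them in $K_I(m)\cap P_I(1,m+1)$; your observation that $K_I(m)\cap P_I(1,m+1)$ is the kernel of the additive map $g\mapsto C\bmod\varpi_F$ replaces all of that with one multiplication-of-blocks check and is cleaner. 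The computation is right: the $(2,1)$-block of $g_1g_2$ is $\varpi_F^m(C_1A_2 + D_1C_2)$, and since every element of $K_I(m)$ has $A\equiv I$ and $D\equiv I$ mod $\varpi_F^m$ (a fortiori mod $\varpi_F$), this reduces to $\varpi_F^m(C_1+C_2)$ modulo $\varpi_F^{m+1}$, and the kernel condition $C\equiv 0\pmod{\varpi_F}$ is exactly membership in $P_I(1,m+1)$ because the remaining conditions on $A,B,D$ are automatic inside $K_I(m)$ with $m\ge 1$. In short: same result, but your proof trades the paper's explicit commutator bookkeeping for structural arguments that are shorter and, at least for part (2), more transparent.
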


\begin{proof}
  The groups $K_I(m)$ and $P_I(1,m)$ satisfy Iwahori decomposition
  with respect to $U_{(n-n_r,n_r)}$, $\bar{U}_{(n-n_r,n_r)}$ and
  $M_{(n-n_r,n_r)}$. We also note that
$$K_I(m)\cap U_{(n-n_r,n_r)}=P_I(1,m)\cap U_{(n-n_r,n_r)}$$
 and 
 $$K_I(m)\cap \bar{U}_{(n-n_r,n_r)}=P_I(1,m)\cap \bar{U}_{(n-n_r,n_r)}.$$
 Hence $P_I(1,m)\cap U_{(n-n_r,n_r)}$ and
 $P_I(1,m)\cap \bar{U}_{(n-n_r,n_r)}$ normalize $K_I(m)$. Since
 $K_I(m)$ is a product of the group $K_n(m)$ and
 $U_{(n-n_r)}(\integers{F})$ the group\\
 $P_I(1,m)\cap M_{(n-n_r,n_r)}$ normalizes the group $K_I(m)$. This
 shows the first part.

 Notice that $K_I(m)\cap U_{(n-n_r,n_r)}$ is equal to
 $K_I(m)\cap P_I(1,m+1)\cap U_{(n-n_r,n_r)}$ and
 $K_I(m)\cap M_{(n-n_r,n_r)}$ is equal to
 $K_I(m)\cap P_I(1,m+1)\cap M_{(n-n_r,n_r)}$ hence it is enough to
 check that $K_I(m)\cap \bar{U}_{(n-n_r,n_r)}$ normalizes the group
 $K_I(m)\cap P_I(1,m+1)$.  Since
 $K_I(m)\cap P_I(1,m+1)\cap \bar{U}_{(n-n_r,n_r)}$ is abelian and is
 contained in $K_I(m)\cap \bar{U}_{(n-n_r,n_r)}$ hence we need to
 check that $u^{-}j(u^{-})^{-1}$ and $u^{-}u^{+}(u^{-})^{-1}$ are
 contained in $K_I(m)\cap P_I(1,m+1)$ for all $u^{-}$, $j$ and $u^{+}$
 in
\begin{align*}
   & K_I(m)\cap \bar{U}_{(n-n_r,n_r)}, \\ 
   & K_I(m)\cap P_I(1,m+1)\cap M_{(n-n_r,n_r)} \ \text{and} \\  
    & K_I(m)\cap P_I(1,m+1)\cap U_{(n-n_r,n_r)}=U_{(n-n_r,n_r)}(\integers{F}) 
 \end{align*} 
 respectively.  Let $u^{+}$, $u^{-}$ and $j$ be three elements from
 $U_{n-n_r, n_r}(\integers{F})$, $K_I(m)\cap \bar{U}_{(n-n_r,n_r)}$
 and $K_I(m)\cap P_I(1,m+1)\cap M_{(n-n_r, n_r)}$ respectively. We
 write them in their block form as:
$$u^{+}=\begin{pmatrix}1_{n-n_r} &B\\0&1_{n_r}\end{pmatrix}$$
where $B\in M_{(n-n_r)\times n_r}(\integers{F})$, 
$$u^{-}=\begin{pmatrix}1_{n-n_r} &0\\\varpi_F^mC&1_{n_r}\end{pmatrix}$$ 
where $C\in M_{n_r\times (n-n_r)}(\integers{F})$  and 
$$j=\begin{pmatrix}J_1& 0\\ 0& J_2\end{pmatrix}.$$
We observe that $u^{-}j(u^{-})^{-1}=j\{j^{-1}u^{-}j(u^{-})^{-1}\}$
and the commutator\\
 $\{j^{-1}u^{-}j(u^{-})^{-1}\}$ in its block form is as follows: 
$$\begin{pmatrix}
  1_{n-n_r} && 0\\
  J_2^{-1}(\varpi_F^mCJ_1^{-1}-\varpi_F^mC) && 1_{n_r}
 \end{pmatrix}.$$
 We note that $J_2\in K_{n_r}(m)$ and $J_1\in K_{n-n_r}(m)$ hence
 $J_2^{-1}(\varpi_F^mCJ_1^{-1}-\varpi_F^mC)$ belongs to
 $$\varpi_F^{m+1}M_{(n-n_r)\times n_r}(\integers{F}).$$
 This shows that
$$\{j^{-1}u^{-}j(u^{-})^{-1}\}\in K_I(m)\cap P_I(m+1)$$
 Now the element $(u^{-})u^{+}(u^{-})^{-1}$ is of the form 
\begin{equation}\label{equation_level_2}
\begin{pmatrix}
1_{n-n_r}-\varpi_F^mBC&&B\\
-\varpi_F^{2m}CBC&& 1_{n_r}+\varpi_F^{m}CB
\end{pmatrix}.
\end{equation}
Since $2m\geq m+1$ the matrix in (\ref{equation_level_2}) is contained
in the group $K_{I}(m)\cap P_I(1,m+1)$.
\end{proof}

We now observe that $K_{I}(m)P_I(1,m+1)=P_I(1,m)$. From Mackey decomposition we get that
$$\res_{K_{I}(m)}\ind_{P_I(1,m+1)}^{P_I(1,m)}(\id)\simeq \ind_{K_{I}(m)\cap P_I(1,m+1)}^{K_{I}(m)}(\id).$$
Hence the above restriction decomposes into  a direct sum of representations of the group 
\begin{equation}\label{equation_level_2.5}
\dfrac{K_{I}(m)}{K_{I}(m)\cap P_I(1,m+1)}.
\end{equation}
The inclusion map of $K_I(m)\cap \bar{U}_{(n-n_r,n_r)}$ in $K_I(m)$
induces the natural homomorphism
$$\tilde{\theta}_{I}:\ \dfrac{K_I(m)\cap
  \bar{U}_{(n-n_r,n_r)}}{P_I(1,m+1)\cap  
\bar{U}_{(n-n_r,n_r)}}\rightarrow \dfrac{K_I(m)}{K_I(m)\cap P_I(1,m+1)}.$$

\begin{lemma}\label{lemma_level_3}
  The map $\tilde{\theta}_I$ is an $M_{(n-n_r, n_r)}\cap P_I(1,m)$
  equivariant isomorphism.
\end{lemma}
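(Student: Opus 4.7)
The plan is to exploit the Iwahori decomposition of $K_I(m)$ with respect to the Levi $M_{(n-n_r,n_r)}$, and show that the $\bar{U}_{(n-n_r,n_r)}$-component of each element detects its class modulo $K_I(m) \cap P_I(1,m+1)$. First I would record the three intersections
$$K_I(m) \cap M_{(n-n_r,n_r)} = K_{n-n_r}(m) \times K_{n_r}(m),\qquad K_I(m) \cap U_{(n-n_r,n_r)} = U_{(n-n_r,n_r)}(\integers{F}),$$
and $K_I(m) \cap \bar{U}_{(n-n_r,n_r)} = K_n(m) \cap \bar{U}_{(n-n_r,n_r)}$. That $\tilde{\theta}_I$ is a well-defined group homomorphism then follows from the inclusion of the source into $K_I(m)$, combined with the normality of $K_I(m) \cap P_I(1,m+1)$ inside $K_I(m)$, which is the content of the preceding lemma.

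For surjectivity, given $k \in K_I(m)$, I would write $k = u^- j u^+$ via the Iwahori decomposition with $u^- \in K_I(m) \cap \bar{U}_{(n-n_r,n_r)}$, $j \in K_I(m) \cap M_{(n-n_r,n_r)}$ and $u^+ \in K_I(m) \cap U_{(n-n_r,n_r)}$. A direct check against the block description of $P_I(1,m+1)$ shows that both $j$ and $u^+$ lie in $K_I(m) \cap P_I(1,m+1)$: the diagonal blocks of $j$ sit in $K_{n-n_r}(m) \subset P_{I'}(1)$ and $K_{n_r}(m) \subset K_{n_r}$, whereas $u^+$ has identity diagonal blocks, an $\integers{F}$-entry upper-right block and a zero lower-left block. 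Hence $k \equiv u^- \pmod{K_I(m) \cap P_I(1,m+1)}$. For injectivity, any $u^- \in K_I(m) \cap \bar{U}_{(n-n_r,n_r)}$ that lies in $K_I(m) \cap P_I(1,m+1)$ is simultaneously lower unipotent and an element of $P_I(1,m+1)$, hence belongs to $P_I(1,m+1) \cap \bar{U}_{(n-n_r,n_r)}$, which is exactly the denominator on the source side.

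Finally, for equivariance under $M_{(n-n_r,n_r)} \cap P_I(1,m)$: any such $g$ normalizes $\bar{U}_{(n-n_r,n_r)}$ and normalizes $K_I(m)$ (verified in the course of the preceding lemma), so conjugation by $g$ preserves the numerators and denominators on both sides; since $\tilde{\theta}_I$ is induced from an inclusion, it commutes with these conjugations. I do not foresee a significant obstacle: the only step requiring some book-keeping is the block-matrix verification that the Levi and upper unipotent components produced by the Iwahori decomposition automatically lie in $P_I(1,m+1)$, and this is a routine inspection of the defining conditions of $P_I(1,m+1)$.
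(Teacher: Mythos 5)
Your proposal is correct and follows essentially the same route as the paper: surjectivity from the Iwahori decomposition of $K_I(m)$ (by checking that the $M$- and $U$-components land in $K_I(m)\cap P_I(1,m+1)$), injectivity by intersecting the source group with $P_I(1,m+1)$, and equivariance because the map is induced by an inclusion and the acting group normalizes the relevant subgroups. You simply make explicit the block-matrix verifications that the paper leaves as "clearly" and "follows from the Iwahori decomposition."
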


\begin{proof}
  The map is clearly injective and surjectivity follows from the
  Iwahori decomposition of $K_I(m)$ with respect to the Levi subgroup
  $M_I$. The inclusion of $K_I(m)\cap \bar{U}_{(n-n_r,n_r)}$ in
  $K_I(m)$ is an $M_{n-n_r, n_r}\cap P_I(1,m)$ equivariant map.
\end{proof}

Let $u^{-}$ be an element of the group
$K_I(m)\cap \bar{U}_{(n-n_r,n_r)}$ and its block form be given by
$$\begin{pmatrix}1_{(n-n_r, n_r)}&0\\U^{-}&1_{n_r}\end{pmatrix}.$$
The map $u^{-}\mapsto \varpi_F^{-m}U^{-}$ induces an isomorphism between the groups
$$K_I(m)\cap \bar{U}_{(n-n_r, n_r)}$$ 
and $M_{n_r\times (n-n_r)}(\integers{F})$. Let $\bar{U^{-}}$ be the image
of $U^{-}$ in the mod-$\ideal{F}$ reduction of
$M_{n_r\times (n-n_r)}(\integers{F})$.  The map
$u^{-}\mapsto \overline{\varpi_F^{-m}U^{-}}$ induces an isomorphism of the
quotient (\ref{equation_level_2.5}) with the group of matrices
$M_{n_r\times(n-n_r)}(k_F)$. We note that 
$M_{(n-n_r, n_r)}(\integers{F})=K_{n-n_r}\times
K_{n_r}$ acts on the group $M_{n_r\times(n-n_r)}(k_F)$ through its
mod-$\ideal{F}$ reduction $\g{n-n_r}{k_F}\times \g{n_r}{k_F}$, the
action is given by $(g_1, g_2)U=g_2Ug_1^{-1}$ for all
$g_1$ in $\g{n-n_r}{k_F}$, $g_2$ in  $\g{n_r}{k_F}$ and
$U$ in $M_{n_r\times(n-n_r)}(k_F)$. The map
$u^{-}\mapsto \overline{\varpi_F^{-m}U^{-}}$ is hence an
$M_{(n-n_r, n_r)}(\integers{F})$-equivariant map between the quotient
(\ref{equation_level_2.5}) and $M_{n_r\times(n-n_r)}(k_F)$. Moreover
the action of $M_{(n-n_r, n_r)}(\integers{F})$ factors through its
quotient $M_{(n-n_r, n_r}(k_F)$.

The space $M_{n\times m}(k_F)$ is equipped with an action of
$G:=\g{m}{k_F}\times \g{n}{k_F}$ given  by
$(g_1, g_2)U=g_2Ug_1^{-1}$. We also have a $G$ action on the set of
matrices $M_{m\times n}(k_F)$ by setting
$(g_1,g_2)V=g_1Vg_2^{-1}$. Let $\psi$ be a non-trivial character of
the additive group $k_F$. We define a pairing $B$ between
$M_{m\times n}(k_F)$ and $M_{n\times m}(k_F)$ by
defining $B(V,U)=\psi\circ\tr(VU)$. Let $T$ be the map from
$M_{m\times n}(k_F)$ and
$M_{n\times m}(k_F)^{\wedge}$ defined by
$$T(V)(U)=B(V,U).$$

\begin{lemma}\label{lemma_level_3.5}
 The map $T$ is a $G$-equivariant isomorphism.
\end{lemma}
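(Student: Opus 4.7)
The plan is to verify in three steps that $T$ is (i) a well-defined homomorphism of abelian groups, (ii) injective (hence bijective, by counting), and (iii) compatible with the two $G$-actions.

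First I would note that for fixed $V$, the function $U\mapsto \psi(\tr(VU))$ is additive in $U$ because $\tr$ is additive and $\psi$ is a character of $(k_F,+)$, so $T(V)$ genuinely lies in $M_{n\times m}(k_F)^{\wedge}$. Then $T(V_1+V_2)=T(V_1)T(V_2)$ follows from the same additivity applied in the $V$ variable, giving a homomorphism of finite abelian groups of the same cardinality $|k_F|^{mn}$.

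Next, for injectivity, suppose $T(V)=1$. Taking $U=cE_{ji}$ for an arbitrary $c\in k_F$ (where $E_{ji}$ is the elementary matrix with a single $1$ in position $(j,i)$) gives $\tr(VU)=cV_{ij}$, so $\psi(cV_{ij})=1$ for all $c\in k_F$. If $V_{ij}\ne 0$, then $cV_{ij}$ ranges over all of $k_F$, forcing $\psi$ to be trivial — contradicting the hypothesis on $\psi$. Hence $V=0$, so $T$ is injective and therefore an isomorphism of abelian groups.

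Finally, for $G$-equivariance, recall that the dual action on characters is $((g_1,g_2)\cdot\chi)(U)=\chi((g_1,g_2)^{-1}U)=\chi(g_2^{-1}Ug_1)$. I would compute both sides:
\[
T((g_1,g_2)V)(U)=\psi(\tr(g_1Vg_2^{-1}U)),\qquad
((g_1,g_2)\cdot T(V))(U)=\psi(\tr(Vg_2^{-1}Ug_1)),
\]
and invoke the cyclic invariance of the trace to conclude equality. None of the steps presents a genuine obstacle; the only point requiring a small argument is the injectivity, where one must exploit that $\psi$ is nontrivial on the additive group $k_F$ so that its kernel is a proper subgroup and cannot contain any nonzero cyclic $k_F$-submodule.
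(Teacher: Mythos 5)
Your argument is correct and takes essentially the same approach as the paper: equivariance via cyclic invariance of the trace, and injectivity/non-degeneracy by pairing against elementary matrices $E_{ji}$ and using non-triviality of $\psi$. You spell out the injectivity step (and the fact that $T(V)$ is indeed a character) more explicitly than the paper does, but the underlying computation is identical.
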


\begin{proof}
That the map $T$ is $G$ equivariant can be verified from the identity
$$(g_1,g_2)T(V)(U)=\psi\circ\tr(Vg_2^{-1}Ug_1)=
\psi\circ\tr(g_1Vg_2^{-1}U)=T((g_1,g_2)V)(U).$$ 
It remains to show that $B$ is non-degenerate. Let $V_{ij}$ ($U_{ij}$)
be a matrix whose $ij$-th entry is $v_{ij}$ ($u_{ij}$) and all other
entries are zero. We observe that $B(U_{ij},
V_{ij})$ is equal to $\psi(u_{ij}v_{ij})$. This shows that $B$ is non-degenerate. 
\end{proof}

The above two lemmas gives an $M_{(n-n_r, n_r)}\cap P_I(1,m)$
 equivariant isomorphism  
\begin{equation}\label{equation_level_2.3}
\theta_I : \left\{\dfrac{K_{I}(m)}{K_{I}(m)\cap
    P_I(1,m+1)}\}\right\}^{\wedge} \rightarrow  M_{(n-n_r)\times n_r}(k_F).
\end{equation}

Since the group $K_I(m)$ is a normal subgroup of $P_I(1,m)$, we have
an action of this group $P_I(1,m)$ on the set of characters of the
abelian group
$$\dfrac{K_{I}(m)}{K_{I}(m)\cap P_I(1,m+1)}.$$
If $\eta$ is one such character we denote by $Z(\eta)$ the
$P_{I}(1,m)$-stabilizer of this character $\eta$. Clifford theory now
gives the decomposition
$$\ind_{P_I(1,m+1)}^{P_I(1,m)}(\id)\simeq \bigoplus_{\eta}\ind_{Z(\eta)}^{P_I(1,m)}(U_{\eta})$$
where $\eta$ runs over a set of representatives for the orbits under
the action of $P_I(1,m)$ and $U_{\eta}$ is some irreducible
representation of the group $Z(\eta)$.  We also note that
$Z(\id)=P_I(1,m)$ and the identity character occurs with 
multiplicity one (which follows from Frobenius reciprocity) and hence
\begin{equation}\label{equation_level_3}
  \ind_{P_I(1,m+1)}^{P_I(1,m)}(\id)\simeq \id\oplus 
  \bigoplus_{\eta\neq\id}\ind_{Z(\eta)}^{P_I(1,m)}(U_{\eta}).
\end{equation}

Observe that
$$Z(\eta)=(Z(\eta)\cap M_{(n-n_r,n_r)})K_I(m).$$
Let $\theta_I(\eta)=A$. Since $\theta_I$ is  $M_{(n-n_r, n_r)}\cap
P_I(1,m)$ equivariant we get that
$$Z(\eta)\cap M_{(n-n_r,n_r)}=Z_{M_{(n-n_r, n_r)}\cap P_I(1,m)}(A)$$
for some matrix $A$ in $M_{(n-n_r)\times n_r}(k_F)$. The group
$M_{(n-n_r, n_r)}\cap P_I(1,m)$ acts on the group of matrices
$M_{(n-n_r, n_r)}(k_F)$ through its mod-$\ideal{F}$ reduction. The
mod-$\ideal{F}$ reduction of the group $P_I(1,m)\cap M_{(n-n_r,n_r)}$
is equal to the group $P_{I'}(k_F)\times\g{n_r}{k_F}$.  In the
next lemma we will bound the mod $\ideal{F}$ reduction of the group
$Z(\eta)\cap M_I$ for the proof of the Main theorem.  Let
$\mathcal{O}_{A}$ be an orbit for the action of
$P_{I'}(k_F)\times\g{n_r}{k_F}$ on the set of matrices
$M_{(n-n_r)\times n_r}(k_F)$.  Let $p_j$ be the projection onto $j^{th}$
factor of $M_I(k_F)=\prod_{i=1}^{r}\g{n_i}{k_F}$.

\begin{lemma}\label{lemma_level_4}
Let $\mathcal{O}_A$ be an orbit consisting of non-zero matrices in 
$$M_{(n-n_r)\times n_r}(k_F).$$
 We can choose a representative $A$ such that the 
$P_{I'}(k_F)\times\g{n_r}{k_F}$-stabilizer of $A$, 
$$Z_{P_{I'}(k_F)\times\g{n_r}{k_F}}(A)$$ 
satisfies one of the following conditions.  
\begin{enumerate}
\item There exists a positive integer $j$, $j\leq r$ such that the
  image of
$$p_j:Z_{P_{I'}(k_F)\times\g{n_r}{k_F}}(A)\cap M_I(k_F)\rightarrow \g{n_j}{k_F}$$
is contained in a proper parabolic subgroup of $\g{n_j}{k_F}$.
\item There exists an $i$ with $1\leq i\leq r-1$ such that
  $p_i(g)=p_r(g)$ for all $g$ in 
$$ Z_{P_{I'}(k_F)\times\g{n_r}{k_F}}(A)\cap M_I(k_F).$$
\end{enumerate}
\end{lemma}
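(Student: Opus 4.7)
The plan is to analyse representatives block-wise, writing $A$ as a column of blocks $A_i \in M_{n_i \times n_r}(k_F)$ for $1 \leq i \leq r-1$. The action $(P, g_r) \cdot A = P A g_r^{-1}$ of $P_{I'}(k_F) \times \g{n_r}{k_F}$, combined with the block-diagonal embedding $M_I(k_F) \hookrightarrow P_{I'}(k_F) \times \g{n_r}{k_F}$, translates the stabiliser condition for $(g_1, \dots, g_r) \in M_I(k_F)$ into $g_i A_i = A_i g_r$ for each $i < r$. This immediately forces $g_i$ to preserve $\img(A_i)$ and $g_r$ to preserve $\ker(A_i)$, which is the mechanism driving the whole argument.

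First I will search for a representative $A$ in the orbit where some block $A_i$ is nonzero yet fails to be a bijection $k_F^{n_r} \to k_F^{n_i}$. If $A_i$ is not surjective, then $\img(A_i)$ is a proper nonzero subspace of $k_F^{n_i}$, so the image of $p_i$ lies inside the proper parabolic of $\g{n_i}{k_F}$ stabilising $\img(A_i)$, which is condition~(1) with $j=i$. If instead $A_i$ is not injective, then $\ker(A_i)$ is a proper nonzero subspace of $k_F^{n_r}$, so the image of $p_r$ lies inside the proper parabolic of $\g{n_r}{k_F}$ stabilising $\ker(A_i)$, which is condition~(1) with $j=r$.

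Failing that, every representative must have each block either zero or an isomorphism $k_F^{n_r} \to k_F^{n_i}$ (forcing $n_i = n_r$ on the nonzero blocks). I then fix such a representative, set $S = \{i < r : A_i \neq 0\}$ and $i_0 = \max S$. Since $U_{I'}$ acts on blocks by $(P \cdot A)_i = A_i + \sum_{j > i} P_{ij} A_j$, I choose $P \in U_{I'}$ with $(i, i_0)$-block equal to $-A_i A_{i_0}^{-1}$ for each $i \in S$ with $i < i_0$ and zero off-diagonal blocks elsewhere; this simultaneously sends every $A_i$ with $i \in S$, $i < i_0$ to zero while fixing $A_{i_0}$. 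Normalising then by $g_r = A_{i_0}$ reduces $A_{i_0}$ to $I_{n_r}$, and for this representative the stabiliser conditions collapse to the single non-trivial relation $g_{i_0} = g_r$ with no constraint on the other $g_i$'s. Hence $p_{i_0}(g) = p_r(g)$ for every $g$ in the stabiliser, which is condition~(2) with $i = i_0$.

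The delicate point is the correct organisation of this dichotomy: the property ``some block is nonzero and non-bijective'' is not orbit-invariant, so for each orbit one must decide whether to privilege a representative displaying this property (yielding~(1)) or---failing that---to pass to the normalised single-block representative constructed above (yielding~(2)).
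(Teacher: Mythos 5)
Your proof is correct and rests on exactly the same mechanism as the paper's: for $(g_1,\dots,g_r)\in M_I(k_F)$ in the stabiliser, the block-diagonal form of the condition $PA=AB$ gives $g_iA_i=A_ig_r$ for each $i<r$, forcing $g_i$ to preserve $\img(A_i)$ and $g_r$ to preserve $\ker(A_i)$. The organisation of the dichotomy, though, is more elaborate than the paper's, and the ``delicate point'' you flag at the end is in fact avoidable. The paper does not partition orbits by an orbit-level property; it fixes one representative, picks the maximal index $l$ with $U_l^{tr}\neq0$, normalises that single block to the rank normal form $\begin{pmatrix}1_t&0\\0&0\end{pmatrix}$ by a block-diagonal element of $M_I(k_F)\times\g{n_r}{k_F}$, and decides the case locally from $t$ versus $n_l$ and $n_r$: if $U_l^{tr}$ fails to be a bijection, the image or kernel constraint already yields condition~(1); if it is a bijection, one gets $p_l(g)=p_r(g)$ and condition~(2). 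In particular, once the chosen block is normalised there is no need to annihilate the remaining blocks with an element of $U_{I'}$ (the relation $g_{i_0}A_{i_0}=A_{i_0}g_r$ for the single block $i_0$ already gives the conclusion regardless of the others), and no need to search through the orbit for a representative displaying a non-bijective block. Your version carries an extra, harmless layer of case analysis; the paper's is the streamlined version of the same argument.
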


\begin{proof}
  Let $A=[U_{1},U_{2},\dots,U_{(r-1)}]^{tr}$ be the block form
  ($U_{k}$ is a matrix of size $n_r\times n_k$ for $1\leq k\leq r-1$)
  of a representative $m$ for an orbit $\mathcal{O}_m$ consisting of
  non-zero matrices.  If
  $((M_{ij}),B)\in Z_{P_{I'}(k_F)\times\g{n_r}{k_F}}(A)$ ($M_{ii}$ is
  a matrix of size $n_i\times n_i$) then we have
\begin{equation}\label{proper}
(M_{ij})[U_{1},U_{2},...,U_{(r-1)}]^{tr}=[U_{1},U_{2},...,U_{(r-1)}]^{tr}B.
\end{equation}
Since $(M_{ij})\in P_{I'}(k_F)$, we have $M_{ij}=0$ for all $i>j$. Let
$l\leq r-1$ be the maximal positive integer such that $U_{l}$
is non-zero and such an $l$
exists since $m\neq 0$.  From (\ref{proper}) we get that
$M_{ll}U_{l}^{tr}t=U_{l}^{tr}B$ where. There exist matrices
$P \in \g{n_r}{k_F}$ and $Q\in \g{n_l}{k_F}$ such that $PU_l^{tr}Q$ is a
matrix of the form
\begin{equation}\label{equation_level_6}
\begin{pmatrix}1_t&0\\0&0\end{pmatrix}
\end{equation}
where $t$ is the rank of the matrix $U_{l}^{tr}$. Now we may change
the representative $A$ to $A'=[U_1',U_2',\dots,U_r']^{tr}$ by the
action of the element
$$(\diag(1_{n_1},\ldots,P,\ldots,1_{n_{r-1}}), Q^{-1})$$
in $P_{I'}(k_F)\times\g{n_r}{k_F}$ such that $U_l'^{tr}$ is the matrix
(\ref{equation_level_6}).  If $t=n_l=n_r$ then condition $(2)$ is
satisfied.  Consider the maps $T_1:k_F^{n_l}\rightarrow k_F^{n_r}$ and
$T_2:k_F^{n_r}\rightarrow k_F^{n_l}$ given by
$$(a_1, a_2, \dots, a_{n_l})\mapsto (a_1, a_2, \dots, a_{n_l})U_l^{tr}$$
and 
$$(a_1,a_2,\dots, a_{n_r})\mapsto U_l^{tr}(a_1, a_2,\dots, a_{n_r})^{tr}$$
respectively.  If $t=n_l=n_r$ does not hold then either of $T_1$ or
$T_2$ has a non-trivial proper kernel (since $U_l\neq 0$). If $T_1$
has a non-trivial proper kernel then $M_{ll}$ preserves this kernel
and hence belongs to a proper parabolic subgroup of $\g{n_r}{k_F}$. If
$T_2$ has a non-trivial proper kernel then $B$ preserves this kernel
and hence belongs to a proper parabolic subgroup of
$\g{n_l}{k_F}$. Hence if $t=n_l=n_r$ does not hold then condition
$(1)$ is satisfied.
\end{proof}

The following lemma is due to Pa\v{s}k\={u}nas but we give a mild modification
for our applications (see \cite[Proposition
6.8]{Paskunas-uniqueness}).

\begin{lemma}\label{lemma_level_5_6} 
Let $G$ be a reductive
  algebraic group over $k_F$
  and $U$ be the unipotent radical of a proper
  parabolic subgroup of $G$. For any subgroup $H$ of $G$ such that
  $H\cap U=\{\id\}$ and $\xi$ any irreducible subrepresentation of
 $H$, there exists an irreducible non-cuspidal
  representation $\sigma$ such that $\xi$ occurs in $\res_H
  \sigma$. 
\end{lemma}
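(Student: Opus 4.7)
The plan is to exhibit a concrete $G$-representation whose irreducible constituents are all non-cuspidal and whose restriction to $H$ contains every irreducible representation of $H$. The natural candidate is the induced representation
$$V := \ind_U^G(\id_U),$$
i.e.\ the permutation representation of $G$ on $G/U$.

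First I would recall that an irreducible representation $\sigma$ of $G$ embeds into $V$ if and only if $\sigma^U \neq 0$, by Frobenius reciprocity. Since $U$ is the unipotent radical of a proper parabolic subgroup $P = MU$, the condition $\sigma^U \neq 0$ means that the Jacquet module of $\sigma$ along $P$ is non-zero, so $\sigma$ is not cuspidal by the standard characterization of cuspidal representations of finite reductive groups. Hence every irreducible constituent of $V$ is a non-cuspidal representation of $G$.

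Next I would compute $\res_H V$ via Mackey's formula:
$$\res_H \ind_U^G(\id_U) \simeq \bigoplus_{g \in H\backslash G / U} \ind_{H \cap gUg^{-1}}^H(\id).$$
Picking out the double coset represented by $g = \id$ and using the hypothesis $H \cap U = \{\id\}$, the corresponding summand is
$$\ind_{\{\id\}}^H(\id) \simeq \mathbb{C}[H],$$
the regular representation of $H$. Since $\xi$ is an irreducible representation of $H$, it occurs in $\mathbb{C}[H]$, hence in $\res_H V$. Decomposing $V$ as a direct sum of its irreducible constituents $\sigma_1, \sigma_2, \dots$ (all non-cuspidal by the first step), we obtain $\xi \hookrightarrow \res_H \sigma$ for at least one such $\sigma$, which yields the lemma.

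There is no real obstacle: the argument is a direct combination of Frobenius reciprocity, the definition of cuspidality in the finite-group setting, and Mackey's double-coset decomposition. The mild modification over Pa\v{s}k\={u}nas's original statement is simply that one argues on an arbitrary subgroup $H$ with $H \cap U = \{\id\}$ rather than a specific subgroup; but since the summand for $g = \id$ in the Mackey decomposition already gives the regular representation of $H$, this generalization is essentially free.
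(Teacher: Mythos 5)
Your proof is correct. It is essentially dual to the paper's argument: you induce from $U$ and restrict to $H$, whereas the paper induces from $H$ (i.e.\ works with $\ind_H^G(\xi)$) and takes $U$-invariants, arguing by contradiction. Both proofs hinge on exactly the same two facts applied to the identity double coset: Mackey's decomposition together with $H\cap U=\{\id\}$ produces either the regular representation of $H$ inside $\res_H\ind_U^G(\id)$ (your version) or the trivial $U$-representation inside $\res_U\ind_H^G(\xi)$ (the paper's version), and Frobenius reciprocity translates ``$\sigma^U\neq 0$'' into ``$\sigma$ is not cuspidal relative to $P$.'' Your direct construction via $V=\ind_U^G(\id)$ is arguably slightly cleaner to read since it avoids the contrapositive, but the two arguments are related by adjunction and carry identical content; neither is more general or more elementary than the other.
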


\begin{proof}
 Suppose the lemma is false then
$$\ind_{H}^{G}(\xi)\simeq\oplus_{k=1}^t\sigma_k$$
such that $\sigma_k$ is cuspidal representation for all $k\leq
t$. Since $U\cap H=\{\id\}$, using Mackey decomposition we deduce that,
$$\ho_{U}(\id,\ind_{H}^{G}(\gamma) )\neq 0.$$
Now by our assumption we have 
$\ho_{U}(\id, \sigma_k)\neq 0$ for some $k\leq t$ and hence a contradiction. 
\end{proof}

The following lemma is similar to Proposition
\ref{proposition_prelim_1}. The lemma is just a modified version of the
Proposition \ref{proposition_prelim_1} for our present use.

\begin{lemma}\label{lemma_level_7}
Let $\Gamma$ be a $K_{n}$-irreducible sub-representation of 
$$\ind_{P_{(n-n_r,n_r)}(m)}^{K_n}\{U_m(\tau_{I'})\boxtimes\tau_{r}\}.$$
If the irreducible sub-representations of $U_m(\tau_{I'})$ are
atypical for the component $s=[M_{I'}, \sigma_{I'}]$, then the
representation $\Gamma$ is atypical for the component
$s=[M_I, \sigma_I]$.
\end{lemma}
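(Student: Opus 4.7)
\emph{Sketch.} The plan mimics Proposition~\ref{proposition_prelim_1}: we produce an irreducible smooth representation $\pi$ of $G_n$ with inertial support different from $s=[M_I,\sigma_I]$ that contains $\Gamma$ on restriction to $K_n$. First, decomposing the $K_{n-n_r}$-semisimple representation $U_m(\tau_{I'})=\bigoplus_\alpha\xi_\alpha$, since induction commutes with direct sums and $\Gamma$ is irreducible, we reduce to $\Gamma\hookrightarrow\ind_{P_{(n-n_r,n_r)}(m)}^{K_n}(\xi\boxtimes\tau_r)$ for a single irreducible $\xi\subset U_m(\tau_{I'})$. By hypothesis, $\xi$ is atypical for $s'=[M_{I'},\sigma_{I'}]$: it embeds in some irreducible smooth $\pi'$ of $G_{n-n_r}$ with $I(\pi')=t'=[L',\mu']\neq s'$. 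As $\tau_r$ is the unique typical representation for $[G_{n_r},\sigma_r]$ (Pa\v{s}k\={u}nas), we have $\tau_r\hookrightarrow\res_{K_{n_r}}\sigma_r$, so $\xi\boxtimes\tau_r\hookrightarrow\pi'\boxtimes\sigma_r$ as $M_{(n-n_r,n_r)}(\integers{F})$-modules; set $\pi=i_{P_{(n-n_r,n_r)}}^{G_n}(\pi'\boxtimes\sigma_r)$.

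Next, I would verify the chain of embeddings
\[
\Gamma\hookrightarrow\ind_{P_{(n-n_r,n_r)}(m)}^{K_n}(\xi\boxtimes\tau_r)\hookrightarrow\ind_{P_{(n-n_r,n_r)}\cap K_n}^{K_n}(\xi\boxtimes\tau_r)\hookrightarrow\ind_{P_{(n-n_r,n_r)}\cap K_n}^{K_n}(\pi'\boxtimes\sigma_r)\simeq\res_{K_n}\pi.
\]
The last isomorphism is the Iwasawa--Mackey formula, and the third embedding is functoriality of induction against $\xi\boxtimes\tau_r\hookrightarrow\pi'\boxtimes\sigma_r$ (both extended trivially on $U_{(n-n_r,n_r)}\cap K_n$). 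The key second embedding rests on the inclusion $P_{(n-n_r,n_r)}\cap K_n\subset P_{(n-n_r,n_r)}(m)$, which holds because any $g\in P_{(n-n_r,n_r)}\cap K_n$ has zero entries below its diagonal blocks and hence trivially satisfies the mod-$\varpi_F^m$ condition defining $P_{(n-n_r,n_r)}(m)$; the two extensions of $\xi\boxtimes\tau_r$ to these subgroups are compatible (trivial on all unipotent parts present).

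Finally, every irreducible subquotient $\pi_j$ of $\pi$ has inertial support $t=[L'\times G_{n_r},\mu'\boxtimes\sigma_r]$. Since $t'\neq s'$, the cuspidal-support multisets of $t'$ and $s'$ differ, and adjoining the common component $[G_{n_r},\sigma_r]$ to both preserves this difference (exactly as in the closing multiset argument of Proposition~\ref{proposition_prelim_1}), so $t\neq s$. Because $K_n$ is compact, $\res_{K_n}\pi$ is semisimple and equals, as a multiset of $K_n$-irreducibles, the sum of the $\res_{K_n}\pi_j$; hence $\Gamma\hookrightarrow\res_{K_n}\pi_j$ for some $j$, making $\Gamma$ atypical for $s$. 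The main obstacle is the correct choice of intermediate subgroup chain; the crucial---and mildly counterintuitive---fact is the inclusion $P_{(n-n_r,n_r)}\cap K_n\subset P_{(n-n_r,n_r)}(m)$, which forces the induced-representation embedding in the needed direction.
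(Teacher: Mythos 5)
Your proof is correct and follows essentially the same route as the paper: reduce to a single irreducible $\xi\subset U_m(\tau_{I'})$, use atypicality of $\xi$ to produce a second Bernstein component $t'\neq s'$ for $G_{n-n_r}$, pass from $\ind_{P_{(n-n_r,n_r)}(m)}^{K_n}$ to $\ind_{P_{(n-n_r,n_r)}\cap K_n}^{K_n}$ via the inclusion $P_{(n-n_r,n_r)}\cap K_n\subset P_{(n-n_r,n_r)}(m)$, and close with the multiset argument showing $[L'\times G_{n_r},\mu'\boxtimes\sigma_r]\neq[M_I,\sigma_I]$. The only cosmetic difference is that you invoke an abstract irreducible $\pi'$ and Pa\v{s}k\={u}nas for $\tau_r\hookrightarrow\res_{K_{n_r}}\sigma_r$ (in the level-zero setting the latter is immediate from the definition of $\tau_r$ as the $K_{n_r}(1)$-invariants), whereas the paper embeds directly into $\res_{K_n}i_{P_{(n-n_r,n_r)}}^{G_n}\{i_{P_J}^{G_{n-n_r}}(\lambda_J)\boxtimes\sigma_r\}$.
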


\begin{proof}
  Let $\rho$ be an irreducible sub-representation of $U_m(\tau_{I'})$.
  If $\rho$ is not typical then, there exists another Bernstein
  component $[M_{J}, \lambda_{J}]$ of $\g{n-n_r}{F}$ such that
$$[M_{I'}, \sigma_{I'}] \neq [M_{J}, \lambda_{J}]$$
and $\rho$ is contained in 
$$\res_{K_{n-n_r}}i_{P_{J}}^{G_{n-n_r}}(\lambda_{J})$$
where $J=(n_1',n_2',\dots,n'_{r'-1})$ and
$\lambda_J=\boxtimes_{i=1}^{r'-1}\lambda_i$. 
The representation  
$$\ind_{P_{(n-n_r,n_r)}(m)}^{K_n}\{\rho\boxtimes\tau_{r}\}$$
 is contained in 
\begin{equation}\label{equation_level_7}
\ind_{P_{(n-n_r,n_r)}\cap K_n}^{K_n}\{\rho\boxtimes\tau_{r}\}.
\end{equation}
The  representation (\ref{equation_level_7}) is contained in the representation 
$$\res_{K_n}i_{P_{(n-n_r,n_r)}}^{G_n}\{i_{P_{J}}^{G_{n-n_r}}(\lambda_{J})\boxtimes\sigma_r\}.$$

Since $[M_{I'}, \sigma_{I'}] \neq [M_{J}, \lambda_{J}]$ there exist an
inertial class $[G_p, \sigma]$ occurring in the multi-set
$$\{[G_{n_1},\sigma_1],[G_{n_2}, \sigma_2],\dots,[G_{n_{r-1}}, \sigma_{r-1}]\}$$ 
with a multiplicity not equal to its multiplicity in the multi-set 
$$\{[G_{n_1'},\lambda_1],[G_{n_2'},
\lambda_2],\dots,[G_{n_{r'-1}'}, \lambda_{r'-1}]\}.$$ 
Hence the classes $[M_I,\sigma_I]$ and $[M_J\times G_{n_r}, \lambda_J\boxtimes\sigma_r]$
represent two distinct Bernstein components for the group $G_n$.
\end{proof}

\section{Proof of the main theorem}\label{section_chapter1_1}
\begin{proof}[Proof of theorem \ref{theorem_level_1}]
  We prove the theorem by using induction on the positive integer $n$,
  the rank of $G_n$. The theorem is true for $n=1$ since
  $U_m(\tau_I)$ is zero. We assume that the theorem is true for all
  positive integers less than $n+1$. We will show the theorem for the
  positive integer $n+1$. Let $s=[M_I, \sigma_I]$ be a level-zero
  inertial class. We assume that the partition $I=(n_1,n_2,\dots,n_r)$
  of $n+1$ satisfies the hypothesis $n_i\leq n_j$ for all
  $1\leq i\leq j\leq r$.  If $r=1$ we have $U_m(\tau_I)=0$ and the
  theorem holds by default. We now assume that $r>1$ and let
  $I'=(n_1,n_2,\dots,n_{r-1})$.

We now break the proof into two cases. The first case is  $n_r=1$ and the second case is $n_r>1$.

\subsection*{The case where \texorpdfstring{$n_r=1$}{}} \label{subsection-level-1}
In this case $n_i=1$ for $1\leq i\leq r$ and $P_I=B_n$ where $B_n$ is
the Borel subgroup of $\text{GL}_n$. We denote by $T_n$ and $U_n$ the
maximal torus and the unipotent radical respectively.  We also use the
notation $B_n(m)$ for the subgroup $P_I(m)$ and $\chi_{I_n}$ for
$\tau_I$ since $I=(1,1,\dots,1)$ is a tuple of length $n$. The proof
is by induction on the integer $n$, the rank of $T_n$.  The statement
is immediate for $n=1$ and for $n=2$ we refer to
\cite[A.2.4]{Henniart-gl_2} for a proof. (We will require the proof for
later use and we will recall it at that stage.) So we prove the
theorem for $n\geq3$.  Suppose the theorem is true for some positive
integer $n\geq 2$. By the
definition of $U_m(\chi_{I_{n+1}})$ we have
$$\ind_{B_{n+1}(m)}^{K_{n+1}}(\chi_{I_{n+1}})\simeq
U_m(\chi_{I_{n+1}})\oplus 
\ind_{B_{n+1}(1)}^{K_{n+1}}(\chi_{I_{n+1}}).$$ 
We have the isomorphism 
$$\ind_{B_{n+1}(m)}^{K_{n+1}}(\chi_{I_{n+1}})\simeq
\ind_{P_{(n,1)}(m)}^{K_{n+1}}
\{\ind_{B_{n}(m)}^{K_n}(\chi_{I_n})\boxtimes\chi_{n+1})\}.$$
We also have the decomposition 
\begin{align*}
\ind_{P_{(n,1)}(m)}^{K_{n+1}}\{\ind_{B_{n}(m)}^{K_n}(\chi_{I_n})\boxtimes\chi_{n+1}\}\simeq& \\
\ind_{P_{(n,1)}(m)}^{K_{n+1}}\{U_m(\chi_{I_n})\boxtimes\chi_n\} \oplus
\ind_{P_{(n,1)}(m)}^{K_{n+1}}&\{\ind_{B_{n}(1)}^{K_n}(\chi_{I_n})\boxtimes\chi_{n+1}\}.
\end{align*}
By the inductive hypothesis and Lemma \ref{lemma_level_7} irreducible
sub-representations of
$$\ind_{P_{(n,1)}(m)}^{K_{n+1}}\{U_m(\chi_{I_n})\boxtimes\chi_{n+1}\}$$
are atypical representations. 
We now consider the irreducible factors of the representation
\begin{equation}\label{equation_case_1}
\ind_{P_{(n,1)}(m)}^{K_{n+1}}\{\ind_{B_n(1)}^{K_n}(\chi_{I_n})\boxtimes\chi_{n+1}\}.
\end{equation}
 We use induction on the integer $m$ to show that the representation 
\begin{align*}
\ind_{P_{(n,1)}(1)}^{K_{n+1}}\{\ind_{B_{n}(1)}^{K_n}(\chi_{I_n})\boxtimes\chi_{n+1}\}
\simeq \ind_{B_{n+1}(1)}^{K_{n+1}}(\chi_{I_{n+1}})
\end{align*}
has a complement say $U_{1,m}(\chi_{I{n+1}})$ in the representation
(\ref{equation_case_1}) whose irreducible sub-representations are all
atypical representations.  This shows that irreducible
sub-representations of $U_m(\chi_{I_{n+1}})$ are atypical.  To reduce
the notations we denote by $P(m)$ the subgroup $P_{(n,1)}(m)$. We note
that $P_{n,1}(m)=P_{(n,1)}(1,m)$ and now applying the decomposition
(\ref{equation_level_3}) to the parabolic subgroup $P_{(n,1)}$ we get
that
$$\ind_{P(m+1)}^{P(m)}(\id)=\id\oplus\ind_{Z(\eta)}^{P(m)}(U_{\eta})$$
where $\eta$
is any non-trivial character of the group $K_{n+1}(m)U_{n,1}(\integers{F})$ which is trivial on 
$K_{n+1}(m)U_{n,1}(\integers{F})\cap P(m+1)$ and $K_{n+1}(m)$ is the
principal congruence subgroup of level $m$ (in the present situation we just have one orbit
consisting of non-trivial characters.)  Let $\theta_{(n,1)}$ be the
isomorphism as in equation (\ref{equation_level_2.3}). We choose $\eta$ such
that $\theta_{(n,1)}(\eta)=[1,0\dots,0]$.

With the above choice for the character $\eta$ we have
\begin{align*}
&\ind_{P(m+1)}^{K_{n+1}}\{\ind_{B_n(1)}^{K_n}(\chi_{I_n})\boxtimes\chi_{n+1}\} 
 \\
&\simeq 
\ind_{P(m)}^{K_{n+1}}\{\ind_{B_n(1)}^{K_n}(\chi_{I_n})\boxtimes\chi_{n+1}\}
\oplus
 \ind_{Z(\eta)}^{K_{n+1}}\{U_{\eta}\otimes\res_{Z(\eta)\cap
  M_{(n,1)}}\{\ind_{B_n(1)}^{K_n}(\chi_{I_n})
\boxtimes\chi_{n+1}\}\}.
\end{align*}
Since the representation
$\ind_{B_n(1)}^{K_n}(\chi_{I_n})\boxtimes \chi_{n+1}$
is trivial on $K_n(1)\times K_1(1)$ we get that 
$$\res_{Z(\eta)\cap M_{(n,1)}}\{\ind_{B_n(1)}^{K_n}(\chi_{I_n})\boxtimes\chi_{n+1}\}$$
is isomorphic to the inflation of the representation 
$$\res_{\overline{Z(\eta)\cap M_{(n,1)}}}\{\ind_{B_n(k_F)}^{\g{n}{k_F}}(\chi_{I_n})\boxtimes\chi_{n+1}\}$$
where $\overline{Z(\eta)\cap M_{(n,1)}}$ is the mod-$\ideal{F}$
reduction of the group $Z(\eta)\cap M_{(n,1)}$. The group
$\overline{Z(\eta)\cap M_{(n,1)}}$ is contained in the following
subgroup
\begin{equation}
\label{equation_case_2}
\left\{\begin{pmatrix}A&B&0\\0&d&0\\0&0&d\end{pmatrix}| 
A\in \g{n-1}{k_F}, B\in M_{(n-1)\times1}(k_F)\ \text{and} \ d\in k_F^{\times}\right\}.
\end{equation}
 Let $\mir_k$ be the mirabolic group 
$$\left\{\begin{pmatrix}A&B\\0&1\end{pmatrix}|A\in \g{k-1}{k_F},
 B \in M_{(k-1)\times 1}(k_F) \right\}.$$
Now we have to understand the restriction 
$$\res_{P_{(n-1,1)}}\ind_{B_n(k_F)}^{\g{n}{k_F}}(\chi_{I_n}).$$
 We begin with understanding the restriction
$$\res_{\mir_{n-1}}\ind_{B_n(k_F)}^{\g{n}{k_F}}(\chi_{I_n}).$$
We use the theory of derivatives (originally for $G_n$ due to
Bernstein and Zelevinsky (see \cite{Bernstein_zelevinsky_1})) to
describe this restriction in a way sufficient for our application. We
refer to \cite[Chapter 3, \textsection 13]{Zelevinsky_finite} for
details of these constructions.

In the case of finite fields from Clifford theory one can define four
exact functors and we recall the formalism here. The precise definitions are
not required for our purpose except for one functor $\Psi^{+}$ which
will be recalled latter:
\begin{center}$\begin{tikzpicture}[node distance=3cm,auto]
\node(A){$\mathcal{M}(\mir_{k-1})$};
\node(B)[right of=A]{$\mathcal{M}(\mir_{k})$};
\node(C) [right of=B] {$\mathcal{M}(\g{k-1}{k_F})$};

\draw[->]([yshift=15pt] A) to node[above] {$\Phi^{+}$}(B);
\draw[->]([yshift=-15pt] B) to node[below] {$\Phi^{-}$}(A);
\draw[->]([yshift=10pt] B) to node[above] {$\Psi^{-}$}(C);
\draw[->]([yshift=-10pt]C) to node[below] {$\Psi^{+}$} (B);
\end{tikzpicture} $\end{center}
The key results we use from Zelevinsky are summarised below 
(see \cite[Chapter 3, \textsection 13]{Zelevinsky_finite}). 

\begin{theorem}[Zelevinsky]
  The functors $\Psi^{+}$
  and $\Phi^{-}$
  are left adjoint to $\Psi^{-}$
  and $\Phi^{+}$
  respectively. The compositions $\Phi^{-}\Phi^{+}$
  and $\Psi^{-}\Psi^{+}$
  are naturally equivalent to identity. Moreover
  $\Phi^{+}\Psi^{-}$ and $\Phi^{-}\Psi^{+}$ are zero. The diagram
$$0\rightarrow\Phi^{+}\Phi^{-}\rightarrow\id\rightarrow\Psi^{+}\Psi^{-}\rightarrow0$$ 
obtained from these properties is exact. 
\end{theorem}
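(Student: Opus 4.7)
The plan is to realise all four functors through Clifford theory for the abelian normal subgroup $U \subset \mir_k$ consisting of matrices $\begin{pmatrix}I_{k-1} & B \\ 0 & 1\end{pmatrix}$ with $B \in M_{(k-1)\times 1}(k_F)$. Under conjugation, the Levi $\g{k-1}{k_F} \subset \mir_k$ acts on $U$ as on the standard module $k_F^{k-1}$, so the induced action on the character group of $U$ has exactly two orbits: the trivial character, and the orbit of any non-trivial character $\psi$, whose stabiliser is (for a suitable choice of $\psi$) the mirabolic $\mir_{k-1}$. With this in hand I would take $\Psi^{+}$ to be inflation along $\mir_k \twoheadrightarrow \g{k-1}{k_F}$, $\Psi^{-}$ to be the $U$-invariants functor, $\Phi^{+}(E) = \ind_{\mir_{k-1} U}^{\mir_k}(E \boxtimes \psi)$, and $\Phi^{-}(W) = \ho_U(\psi, W)$ regarded as a $\mir_{k-1}$-module via the stabiliser.

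The adjunction and vanishing statements are then near-tautological. The pair $(\Psi^{+}, \Psi^{-})$ is the standard inflation/invariants adjunction for the quotient $\mir_k / U \simeq \g{k-1}{k_F}$ (using that invariants coincide with coinvariants in the semisimple setting), while $(\Phi^{-}, \Phi^{+})$ is Frobenius reciprocity for the inclusion $\mir_{k-1} U \subset \mir_k$, with the character $\psi$ absorbed into the induction. The identity $\Psi^{-}\Psi^{+} = \id$ is immediate. For the vanishings, $\Psi^{+}$ produces representations on which $U$ acts trivially, so their $\psi$-isotypic is zero; dually, by Mackey applied to the restriction of $\Phi^{+}(E)$ from $\mir_k$ to $U$, the characters of $U$ that appear lie entirely in the $\g{k-1}{k_F}$-orbit of $\psi$, none of which is trivial, so the $U$-invariants vanish.

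The substantive point is $\Phi^{-}\Phi^{+} = \id$, where I expect the main obstacle to lie. The strategy is to restrict $\Phi^{+}(E)$ to $U$ via Mackey and extract the $\psi$-isotypic piece: it is spanned by those sections supported on the identity double coset, which as a vector space identifies canonically with $E$. The delicate part is the equivariance check that the induced $\mir_{k-1}$-action on this piece coincides with the original action on $E$, requiring a short but careful calculation with double-coset representatives for $\mir_k / \mir_{k-1} U$ and the compatibility of $\psi$ with the $\mir_{k-1}$-action.

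Finally, for the short exact sequence $0 \to \Phi^{+}\Phi^{-} \to \id \to \Psi^{+}\Psi^{-} \to 0$, I would invoke Clifford's theorem directly for the abelian normal subgroup $U$: every $W \in \mathcal{M}(\mir_k)$ splits canonically as $W^{U} \oplus W'$, where $W'$ is the sum of $U$-isotypics over the non-trivial orbit. The first summand is $\Psi^{+}\Psi^{-}W$ by construction, and the induction-from-stabiliser form of Clifford's theorem combined with the identity $\Phi^{-}\Phi^{+} = \id$ already proved identifies the second with $\Phi^{+}\Phi^{-}W$. The short exact sequence claimed in the theorem is then the resulting (split) direct-sum decomposition.
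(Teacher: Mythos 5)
The paper does not actually prove this theorem; it is quoted from Zelevinsky's book \cite[Chapter 3, \S 13]{Zelevinsky_finite}, so there is no in-paper argument to compare against. Your proposal is a correct reconstruction, and it is in fact the standard one: the four functors are exactly the Clifford-theoretic operations relative to the abelian normal subgroup $U$, and Zelevinsky's own treatment proceeds the same way. Your orbit/stabiliser analysis is right (the $\g{k-1}{k_F}$-action on the non-trivial characters of $U \cong k_F^{k-1}$ is transitive, and the stabiliser of a suitably normalised non-trivial $\psi$ is $\mir_{k-1}$), the two adjunctions are Frobenius reciprocity and inflation/invariants, the vanishings are as you say, and $\Phi^{-}\Phi^{+}\simeq\id$ follows from the Mackey restriction of $\Phi^{+}(E)$ to $U$, with the $\mir_{k-1}$-equivariance verified on the identity coset.

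Two small remarks. First, in the last step the appeal to $\Phi^{-}\Phi^{+}\simeq\id$ is not actually needed: the induction-from-stabiliser form of Clifford's theorem identifies the sum of the non-trivial $U$-isotypics directly with $\ind_{\mir_{k-1}U}^{\mir_k}\bigl(W^{U,\psi}\boxtimes\psi\bigr)=\Phi^{+}\Phi^{-}(W)$, without passing through $\Phi^{-}\Phi^{+}$. Second, it is worth being explicit that since the coefficient field has characteristic zero and the groups are finite, $U$-invariants and $U$-coinvariants coincide and every functor here is simultaneously left and right adjoint to its partner; this is what makes both natural maps in the sequence $0\to\Phi^{+}\Phi^{-}\to\id\to\Psi^{+}\Psi^{-}\to 0$ available at once and the sequence split. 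With those caveats, the proposal is correct and complete.
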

Using this theorem and following Bernstein-Zelevinsky one can define a
filtration $Fil$
on a finite dimensional representation $\tau$
of $\mir_n$, for all $n>1$.  The filtration $Fil$ is given by
$$0\subset\tau_n\subset...\subset\tau_3\subset\tau_2\subset\tau_1=\tau$$ 
where $\tau_k=(\Phi^{+})^{k-1}(\Phi^{-})^{k-1}$
and
$\tau_k/\tau_{k+1}=(\Phi^{+})^{k-1}\Psi^{+}\Psi^{-}(\Phi^{-})^{k-1}(\tau)$
for all $k\geq
1$. The representation
$\tau^{(k)}:=\Psi^{-}(\Phi^{-})^{k-1}(\tau)$
for all $k\geq0$
of $\g{n-k}{k_F}$
is called the $k^{th}$-derivative
of $\tau$ and by convention $\tau^{(0)}:=\tau$.

Let $R_n$ be the Grothendieck group of  $\g{n}{k_F}$ for all
$n\geq1$ and set $R_0=\mathbb{Z}$. Zelevinsky defined a ring structure
on the group $R=\oplus_{n\geq0}R_n$ by setting parabolic induction as
the product rule. Recall that the ring $R$ has a $\mathbb{Z}$-linear
map 
$D$ defined by setting 
$D(\pi)=\sum_{k\geq0}(\pi|_{\mir_n})^{(k)}$ for all $\pi$ in $R_n$. It follows from
\cite[Chapter 3, \textsection 13]{Zelevinsky_finite} that
$D$ is an endomorphism of the ring $R$.  If $\pi$
is a supercuspidal representation of $\g{n}{k_F}$ then by
Gelfand-Kazhdan theory it follows that $\pi^{(n)}=1$, $\pi^{(0)}=\pi$
and all other derivatives are zero (see \cite[Chapter 3, \textsection
13]{Zelevinsky_finite}). Let $1_R\in R_0$ be the identity element of
$R$.

In our present situation we have 
$$D(\ind_{B_n(1)}^{K_n}(\chi_{I_n}))=\prod_{i=1}^{n}D(\chi_i)=\prod_{i=1}^{n}(\chi_i+1_R).$$
Let $X_{n-k}$ be the term of degree $(n-k)$ in the expansion of the
above product. (It is an actual  representation of
$\g{n-k}{k_F}$, since the
coefficients of the above expansion are positive.) Then we have
$$\res_{\mir_{n-1}}\ind_{B_n(k_F)}^{\g{n}{k_F}}(\chi_{I_n})\simeq 
\bigoplus_{k\geq1}^n(\Phi^+)^{k-1}\Psi^{+}(X_{n-k}).$$
Observe that $P_{(n-1,1)}=\mir_{(n-1)}k_F^{\times}$ (here
$k_F^{\times}$ is the centre of $\g{n}{k_F}$) and
$\mir_{(n-1)}\cap k_F^{\times}=\id$. The representation
$$\rho:=(\Phi^{+})^{k-1}\Psi^{+}(X_{n-k})$$
extends to a representation of $P_{(n-1,1)}$ by setting
$\rho(a)=\chi(a)$ for all $a\in k_F^{\times}$ where $\chi$ is the
central character of the representation
 $$\ind_{B_{n}(k_F)}^{\g{n}{k_F}}(\boxtimes_{i=1}^{n}\chi_i).$$
 Since the central character will play some role, we denote the
 extended representation by
 $$ext\{(\Phi^{+})^{k-1}\Psi^{+}(X_{n-k})\}.$$ 
 By inflation and (later restriction) we  extend the 
$P_{(n-1,1)}(k_F)\times k_F^{\times}$-representation
$$ext\{(\Phi^{+})^{k-1}\Psi^{+}(X_{n-k})\}\boxtimes\chi_{n+1}$$ 
to a representation of $Z(\eta)\cap M_{(n,1)}$. We continue to use the notation 
$$ext\{(\Phi^{+})^{k-1}\Psi^{+}(X_{n-k})\}\boxtimes\chi_{n+1}$$
considered as a representation of $Z(\eta)\cap M_{(n,1)}$.  We now
have
\begin{align*}\ind_{P(m+1)}^{K_{n+1}}(\chi_{I_n})\simeq &\ind_{P(m)}^{K_{n+1}}(\chi_{I_n})\oplus\\
                                                                      &
                                                                        \bigoplus_{k\geq1}^n \ind_{Z(\eta)}^{K_{n+1}}\{ext\{(\Phi^{+})^{k-1}\Psi^{+}(X_{n-k})\}\boxtimes\chi_{n+1}\}.\end{align*}

We will show that any irreducible sub-representation of 
$$\ind_{Z(\eta)}^{K_{n+1}}(ext\{(\Phi^{+})^{k-1}\Psi^{+}(X_{n-k})\}\boxtimes\chi_{n+1})\}$$
is atypical for the component $[T_n, \chi_{I_n}]$.

We first consider the case when $k\geq 2$.  The representation
$X_{n-k}$ is a direct sum of the representations:
$$\ind_{B_{n-k}(k_F)}^{\g{n-k}{k_F}}(\chi_{i_1}\boxtimes\chi_{i_2}
\boxtimes\dots\boxtimes\chi_{i_{n-k}}).$$
The above term also occurs in the expansion 
$$\prod_{j=1}^{n-k}(1_R+\chi_{i_j})(1_R+\lambda)$$
where $\lambda$ is a cuspidal representation of $\g{k}{k_F}$.  To
shorten the notation we use the symbol $\times$ for the multiplication
in the ring $R$. We get that the representation
$$(\Phi^{+})^{k-1}\Psi^{+}(\chi_{i_1}\times\chi_{i_2}\times\ldots\times\chi_{i_{n-k}})$$ 
occurs in the representation 
$$\res_{\mir_{n-1}}(\chi_{i_1}\times\chi_{i_2}\times\ldots\times\chi_{i_{n-k}}\times\lambda).$$

Note that the mod-$\ideal{F}$ reduction of the group
$Z(\eta)\cap M_{(n,1)}$ is contained in the subgroup of the form
\eqref{equation_case_2} and recall that the $(n-1)^{th}$ diagonal
entry of any element in \eqref{equation_case_2}  is
the same as its $n^{th}$ diagonal entry. Let $\eta_1$ and $\eta_2$ be the central characters of
$\chi_1\times\chi_2\times\ldots\times\chi_n$ and
$(\chi_{i_1}\times\chi_{i_2}\times\ldots\times\chi_{i_{n-k}})\times\lambda$
respectively.
Let $\chi_{n+1}'$ be the character $\chi_{n+1}\eta_2^{-1}\eta_1$ and now 
the representation
$$\res_{Z(\eta)\cap M_{(n,1)}}\{ext((\Phi^{+})^{k-1}\Psi^{+}
(\chi_{i_1}\times\chi_{i_2}\times\ldots\times\chi_{i_{n-k}}))\}\boxtimes\chi_{n+1}$$
occurs in the representation 
$$\res_{Z(\eta)\cap M_{(n,1)}}
(\chi_{i_1}\times\chi_{i_2}\times\ldots\times\chi_{i_{n-k}})\times\lambda)
\boxtimes\chi_{n+1}'.$$
Hence  an irreducible sub-representation of 
\begin{equation}\label{equation_case_2.5}
\ind_{Z(\eta)}^{K_{n+1}}\{(ext\{(\Phi^{+})^{k-1}
\Psi^{+}(X_{n-k})\}\boxtimes\chi_{n+1})\otimes U_{\eta}\}
\end{equation}
occurs as a sub-representation of
\begin{equation}\label{equation_case_3}
\ind_{Z(\eta)}^{K_{n+1}}
\{\{(\chi_{i_1}\boxtimes\chi_{i_2}\boxtimes\dots\boxtimes\chi_{i_{n-k}}
\boxtimes\lambda\boxtimes\chi_{n+1}')\}\otimes U_{\eta}\}.
\end{equation}
The above representation occurs as a sub-representation of 
\begin{equation}\label{equation_case_4}
\ind_{P_{(1,1,\dots,k,1)}\cap
  K_{n+1}}^{K_{n+1}}\{\chi_{i_1}\boxtimes\chi_{i_2}
\boxtimes\dots\boxtimes\chi_{i_{n-k}}\boxtimes\lambda\boxtimes\chi_{n+1}'\}.
\end{equation}
Hence the sub-representation of (\ref{equation_case_2.5}) are not typical representations. 

Now we are left with the term 
\begin{equation}\label{equation_case_5}
\ind_{Z(\eta)}^{K_{n+1}}\{(ext\{\Psi^{+}(X_{n-1})\}\boxtimes\chi_{n+1})\otimes U_{\eta}\}.
\end{equation}

If we repeat the same strategy as for $k\geq2$ then
$\lambda$ is one-dimensional so the representations
$(\ref{equation_case_4})$ and $\chi_1\times\chi_2\times\ldots\chi_{n+1}$ may not have
distinct inertial support. In order to tackle the terms of the above
representation we use a different technique. We now recall the
definition of the representation $U_{\eta}$, the functor $\Psi^{+}$
and some facts due to Casselman regarding the restriction of an
irreducible smooth representation to the maximal compact subgroup
$\g{2}{\integers{F}}$.

The representation $U_{\eta}$ is a character on the group
$Z(\eta)$. From $(\ref{equation_case_2})$ any element of the group
$Z(\eta)$ is of the form
\begin{equation}\label{equation_case_5.5}
\begin{pmatrix}A&B&X'\\\varpi_FC&d&y\\\varpi_F^mX&\varpi_F^my'&e\end{pmatrix} 
\end{equation}
where $A\in \g{n-1}{\integers{F}}$; $(X'), X^{tr},B,C^{tr}\in
M_{(n-1)\times1}(\integers{F})$; 
$e,d\in \mathcal{O}_F^{\times}$; $y, y'\in \integers{F}$ and $d\equiv e \pmod{\ideal{F}}$.
The character  $U_{\eta}$ is given by 
$$\begin{pmatrix}A&B&X'\\\varpi_FC&d&y\\\varpi_F^mX&\varpi_F^my'&e\end{pmatrix}
\mapsto \eta(\varpi_F^my').$$
The functor 
$$\Psi^{+}:\mathcal{M}(\g{k-1}{k_F})\rightarrow \mathcal{M}(\mir_k)$$ 
is the inflation functor via the quotient map of $\mir_k$ modulo the
unipotent radical of 
$\mir_k$.

Let $(\pi,V_{\pi})$ be an irreducible smooth representation of
$\g{2}{F}$. We denote by $c(\pi)$ and $\varpi_{\pi}$ the conductor
(see \cite[Theorem 1]{casselman_atkin}) and
central character of the representation $\pi$ respectively.  Let
$V^{N}$ be the space of all vectors fixed by the principal congruence
subgroup of level $N$ for all $N\geq 1$. For all $i> c(\varpi_{\pi})$
we define the representation $U_i(\varpi_{\pi})$ as the complement of the
representation $\ind_{B_2(i-1)}^{\g{2}{\integers{F}}}(\varpi_{\pi})$ in
$\ind_{B_2(i)}^{\g{2}{\integers{F}}}(\varpi_{\pi})$. For $i=c(\varpi_{\pi})$
we set
$$U_i(\varpi_{\pi})=\ind_{B_2(i)}^{\g{2}{\integers{F}}}(\varpi_{\pi}\boxtimes\id).$$
It follows from \cite[Proposition 1]{Casselmanres} that the
representation $U_i(\varpi_{\pi})$ is an irreducible representation of
$\g{2}{\integers{F}}$. From the result \cite[Proposition
2]{Casselmanres} we get that $c(\pi)\geq c(\varpi_{\pi})$. By
\cite[Theorem 1]{Casselmanres} we have
\begin{equation}\label{equation_level_casselman}
\res_{\g{2}{\integers{F}}}V_{\pi}=V^{(c(\pi)-1)}\oplus\bigoplus_{i\geq c(\pi)} U_{i}(\varpi_{\pi}).
\end{equation}
We now describe the representation $U_i(\varpi_{\pi})$ in our
language. Let $\eta$ be a non-trivial character of the group
$K_2(m)U_{(1,1)}(\integers{F})$
trivial modulo 
$$K_2(m)U_{(1,1)}(\integers{F})\cap B_2(m+1).$$
Let $Z(\eta)$ be the $B_2(m)$ stabilizer of $\eta$.  Any element of
the group $Z(\eta)$ is of the form
$$\begin{pmatrix}a&b\\c&d\end{pmatrix}$$
where $a,d\in \mathcal{O}_F^{\times}$; $b\in \integers{F}$,
$c\in \mathfrak{P}_F^m$ and $d\equiv a$ modulo $\ideal{F}$.  We define
a character $U_{\eta}$ by setting
$$\begin{pmatrix}a&b\\c&d\end{pmatrix} \mapsto \eta(c).$$
We then have 
$$U_m(\varpi)\simeq \ind_{Z(\eta)}^{K_2}(U_{\eta}\otimes(\varpi\boxtimes\id)).$$

Now let us resume the proof in the general case $n> 2$ the
representation
$$\ind_{Z(\eta)}^{K_{n+1}}\{(ext\{\Psi^{+}(X_{n-1})\}\boxtimes\chi_{n+1})\otimes U_{\eta}\}$$
is contained in the representation 
\begin{equation}\label{equation_case_6}
\ind_{P_{(n-1,2)}(m)}^{K_{n+1}}(X_{n-1}\boxtimes U_m(\chi))
\end{equation}
where $\chi$ is given by $\prod_{i=1}^n\chi_i$ of
$\mathcal{O}_F^{\times}$. This representation, by the theorem of
Casselman (see the decomposition \eqref{equation_level_casselman}) is
contained in the representation
$$\ind_{P_{(n-1,2)}\cap K_{n+1}}^{K_{n+1}}(X_{n-1}'\boxtimes \sigma)$$
where $\sigma$ is a supercuspidal representation of level-zero with
central character $\chi$ (see the remark below for the existence) and
$X_{n-1}'$ is the $(n-1)^{\rm th}$ derivative of the representation
$$i_{B_n}^{G_n}(\chi_{I_n}).$$
Hence irreducible sub-representations of \eqref{equation_case_5} are
atypical. This completes the proof that irreducible
sub-representations of
$$ \ind_{Z(\eta)}^{K_{n+1}}\{U_{\eta}\otimes\res_{Z(\eta)\cap M_{(n,1)}}\{\ind_{B_n(1)}^{K_n}(\chi_{I_n})\boxtimes\chi_{n+1}\}\}$$
are atypical. From the  decomposition  
\begin{align*}
&\ind_{P(m+1)}^{K_{n+1}}\{\ind_{B_n(1)}^{K_n}(\chi_{I_n})\boxtimes\chi_{n+1}\} \\
&\simeq 
\ind_{P(m)}^{K_{n+1}}\{\ind_{B_n(1)}^{K_n}(\chi_{I_n})\boxtimes\chi_{n+1}\}\\
& \hspace{10mm}\oplus
 \ind_{Z(\eta)}^{K_{n+1}}\{U_{\eta}\otimes\res_{Z(\eta)\cap M_{(n,1)}}\{\ind_{B_n(1)}^{K_n}(\chi_{I_n})\boxtimes\chi_{n+1}\}\}.
\end{align*}
we get the theorem for the case where $n_r=1$.

\begin{remark}
  The existence of the cuspidal representation of $\g{2}{k_F}$ with a
  given central character can be deduced from the explicit formula for
  such representations, we refer to \cite[Theorem section
  6.4]{Yellowbook}. To be precise we begin with a quadratic extension
  $k$ of $k_F$ and $\theta$ a character of $k^{\times}$ such that
  $\theta^q\neq \theta$ where $q=\#k_F$. These characters are called
  regular characters and for any regular character one can define a
  supercuspidal representation $\pi_{\theta}$ and conversely all
  supercuspidal representations are of the form $\pi_{\theta}$ for
  some regular character $\theta$. The central character of
  $\pi_{\theta}$ is given by $\res_{k_F^{\times}}(\theta)$.  Now to
  get a supercuspidal representation with a central character $\chi$
  we begin with a character $\chi$ on $k_F^{\times}$, there are
  $\#k_F+1$ possible extensions to $k^{\times}$. The set of characters
  $\theta$ such that $\theta^q=\theta$ has cardinality $\#k_F-1$. Hence there
  exists at least one supercuspidal representation with a given
  central character $\chi$.  This shows that irreducible
  sub-representations of $(\ref{equation_case_6})$ are not typical and
  this completes the proof of the theorem in this case.
\end{remark}

\subsection*{The case where \texorpdfstring{$n_r>1$}{}}
By transitivity of induction we have
$$\ind_{P_I(m)}^{P_I(1)}(\tau_I)\simeq \ind_{P_I(1,m)}^{P_I(1)}\{\ind^{P_I(1,m)}_{P_I(m)}(\tau_I)\}.$$
We note that $P_I(1,m)\cap U_{(n-n_r+1,n_r)}$ is equal to $P_I(m)\cap U_{(n-n_r+1,n_r)}$ and 
$P_I(1,m)\cap \bar{U}_{(n-n_r+1,n_r)}$ is equal to $P_I(m)\cap \bar{U}_{(n-n_r+1,n_r)}$ hence Lemma
\ref{lemma_prelim_3} gives the isomorphism 
$$\ind_{P_I(1,m)}^{P_I(1)}\{\ind^{P_I(1,m)}_{P_I(m)}(\tau_I)\}\simeq
\ind_{P_I(1,m)}^{P_I(1)}
\{(\ind_{P_{I'}(m)}^{P_{I'}(1)}(\tau_{I'}))\boxtimes\tau_r)\}.$$
Splitting the representation $\ind_{P_{I'}(m)}^{P_{I'}(1)}(\tau_{I'})$
as $\tau_{I'}\oplus U_m^0(\tau_{I'})$ we get that
$$
\ind_{P_I(1,m)}^{P_I(1)}\{(\ind_{P_{I'}(m)}^{P_{I'}(1)}(\tau_{I'}))\boxtimes\tau_r)\}\simeq 
\ind_{P_I(1,m)}^{P_I(1)}\{U^0_m(\tau_{I'})\boxtimes\tau_r\}\oplus \ind_{P_I(1,m)}^{P_I(1)}(\tau_I).$$
From Frobenius reciprocity the representation $\tau_I$ occurs in
$\ind_{P_I(1,m)}^{P_I(1)}(\tau_I)$ with multiplicity one. Let
$U_{(1,m)}^0(\tau_I)$ be the complement of $\tau_I$ in
$\ind_{P_I(1,m)}^{P_I(1)}(\tau_I)$.  With this we conclude that
$$\ind_{P_I(m)}^{P_I(1)}(\tau_I)\simeq
\ind_{P_I(1,m)}^{P_I(1)}\{U^0_m(\tau_{I'})\boxtimes\tau_r\}
\oplus U_{(1,m)}^0(\tau_I)\oplus \tau_I.$$
By definition
$U_{m}(\tau_I)=\ind_{P_I(1)}^{K_{n+1}}(U_m^0(\tau_I))$
which shows that
$$U_{m}(\tau_I)\simeq
\ind_{P_I(1,m)}^{K_{n+1}}\{U^0_m(\tau_{I'})\boxtimes\tau_r\}
\oplus \ind_{P_I(1)}^{K_{n+1}}(U_{(1,m)}^0(\tau_I)).$$
We observe that 
$$P_I(1,m)\cap U_{(n-n_r+1,n_r)}=P_{(n-n_r+1,n_r)}(m)\cap
U_{(n-n_r+1,n_r)}$$
 and 
$$P_I(1,m)\cap \bar{U}_{(n-n_r+1,n_r)}=P_{(n-n_r+1,n_r)}(m)\cap \bar{U}_{(n-n_r+1,n_r)}$$ 
hence Lemma \ref{lemma_prelim_3} applied to the groups $J_2=P_I(1,m)$
and $J_1=P_{(n-n_r+1,n_r)}(m)$ and
$\lambda=U^0_m(\tau_{I'})\boxtimes\tau_r$ gives us the isomorphism
$$\ind_{P_I(1,m)}^{K_{n+1}}\{U^0_m(\tau_{I'})\boxtimes\tau_r\}\simeq 
\ind_{P_{(n-n_r+1,n_r)}(m)}^{K_{n+1}}\{U_m(\tau_{I'})\boxtimes\tau_r\}.$$
With this we are in a place to use the induction hypothesis through
the isomorphism
\begin{equation}\label{equation_proof_1}
U_{m}(\tau_I)\simeq
\ind_{P_{(n-n_r+1,n_r)}(m)}^{K_{n+1}}\{U_m(\tau_{I'})\boxtimes\tau_r\}
\oplus \ind_{P_I(1)}^{K_{n+1}}(U_{(1,m)}^0(\tau_I)).
\end{equation}

By induction hypothesis $\g{n-n_r+1}{\integers{F}}$-irreducible
sub-representations of $U_m(\tau_{I'})$ are atypical for the component
$[M_{I'}, \sigma_{I'}]$. Now Lemma \ref{lemma_level_7} and the
equation \eqref{equation_proof_1} reduce the proof of the theorem to
showing that irreducible sub-representations of
$\ind_{P_I(1)}^{K_{n+1}}(U_{(1,m)}^0(\tau_I))$ are
atypical representations.

\begin{proposition}
 The irreducible sub-representations of 
 $$\ind_{P_I(1)}^{K_{n+1}}(U_{(1,m)}^0(\tau_I))$$
  are atypical for $m\geq1$.
\end{proposition}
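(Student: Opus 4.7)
My plan is to proceed by induction on $m$. The base case $m=1$ is trivial since $P_I(1,1)=P_I(1)$ forces $U_{(1,1)}^0(\tau_I)=0$. For the inductive step, I use transitivity of induction together with the Clifford-type decomposition \eqref{equation_level_3} applied to the pair $P_I(1,m+1)\subset P_I(1,m)$. Since $\tau_I$ is inflated from a representation of $M_I(k_F)$ and is trivial on $K_I(m)$, the projection formula yields
$$\ind_{P_I(1,m+1)}^{P_I(1,m)}(\tau_I)\simeq \tau_I\oplus \bigoplus_{\eta\neq \id}\ind_{Z(\eta)}^{P_I(1,m)}(U_\eta\otimes \tau_I).$$
Inducing to $P_I(1)$ and matching with the definitions of $U_{(1,m)}^0$ and $U_{(1,m+1)}^0$, one obtains the recursion
$$U_{(1,m+1)}^0(\tau_I)\simeq U_{(1,m)}^0(\tau_I)\oplus \bigoplus_{\eta\neq\id}\ind_{Z(\eta)}^{P_I(1)}(U_\eta\otimes \tau_I).$$
By the inductive hypothesis the first summand, when further induced to $K_{n+1}$, has only atypical irreducible subrepresentations; hence the proposition reduces to showing that, for every non-trivial orbit representative $\eta$, the irreducible subrepresentations of $\ind_{Z(\eta)}^{K_{n+1}}(U_\eta\otimes \tau_I)$ are atypical for $[M_I,\sigma_I]$.

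For a fixed non-trivial $\eta$, Lemma \ref{lemma_level_4} classifies the mod-$\ideal{F}$ reduction of the stabilizer $Z(\eta)\cap M_I$ into two mutually exclusive cases: (1) its projection $H_j$ to some factor $\g{n_j}{k_F}$ is contained in a proper parabolic $Q_j\subset \g{n_j}{k_F}$, or (2) one has $p_i(g)=p_r(g)$ for all such $g$ and some $i\leq r-1$ (which forces $n_i=n_r$). In case (1), since $H_j\subset Q_j$ we have $H_j\cap \bar{U}(Q_j)=\{\id\}$, so by Lemma \ref{lemma_level_5_6} each irreducible subrepresentation of $\res_{H_j}\tau_j$ is contained in some non-cuspidal representation $\sigma_j'$ of $\g{n_j}{k_F}$. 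Using this together with Frobenius reciprocity and the Iwasawa decomposition of $K_{n+1}$, I would embed an irreducible subrepresentation of $\ind_{Z(\eta)}^{K_{n+1}}(U_\eta\otimes \tau_I)$ into the restriction of a parabolic induction $i_{P_J}^{G_{n+1}}(\lambda_J)$ where $J$ refines $I$ by breaking the $j$-th block along $Q_j$; such a refinement produces an inertial class strictly different from $[M_I,\sigma_I]$. In case (2), the diagonal coupling encoded by $U_\eta$ between the $i$-th and $r$-th blocks permits an analogous embedding into a parabolic induction from the \emph{coarser} Levi obtained by merging the $i$-th and $r$-th blocks into $\g{n_i+n_r}{F}$; the resulting multi-set of cuspidal components of the Levi factors differs from that of $[M_I,\sigma_I]$, yielding atypicality.

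The main obstacle I expect is making the embeddings of the previous paragraph rigorous in both cases. It requires a careful Mackey decomposition of $\ind_{Z(\eta)}^{K_{n+1}}(U_\eta\otimes \tau_I)$ combined with the Iwahori decomposition of $K_{n+1}$ relative to the target parabolic $P_J$, so as to track how the nontrivial character $U_\eta$ on the off-diagonal blocks of $Z(\eta)$ interacts with the block structure of $M_J$. This step plays the role that Casselman's restriction theorem played in the $n_r=1$ treatment above, but now has to accommodate the genuinely non-abelian cuspidal representations $\tau_j$ of $\g{n_j}{k_F}$ in place of the diagonal characters $\chi_i$.
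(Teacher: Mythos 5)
Your reduction steps are correct and match the paper: the recursion for $U_{(1,m)}^0(\tau_I)$ coming from \eqref{equation_level_3} and the projection formula, and the reduction to showing that for each non-trivial orbit representative $\eta$ the irreducible subrepresentations of $\ind_{Z(\eta)}^{K_{n+1}}(U_\eta\otimes\tau_I)$ are atypical. The divergence, and the gap, is in how you propose to handle the two cases of Lemma \ref{lemma_level_4}.

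The paper does \emph{not} treat cases (1) and (2) by two different mechanisms (refining the $j$-th block, versus merging the $i$-th and $r$-th blocks). It handles both uniformly: in either case one produces a unipotent subgroup $U$ of the \emph{whole} Levi $M_I(k_F)$ (the unipotent radical of a parabolic in the $j$-th factor in case (1); in case (2), any unipotent radical $U_r$ of a parabolic in the $r$-th factor, embedded in the $r$-th slot, which meets $\overline{Z(\eta)\cap M_I}$ trivially precisely because $p_i=p_r$ there) such that $\overline{Z(\eta)\cap M_I}\cap U=\{\id\}$, and then applies Lemma \ref{lemma_level_5_6} once, with $G=M_I(k_F)$. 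This yields a \emph{non-cuspidal} representation $\tau_I'$ of $M_I(k_F)$ whose restriction to $\overline{Z(\eta)\cap M_I}$ contains that of $\tau_I$. The crucial point you are missing is that you never need to change the parabolic: the containment $\res\tau_I\subset\res\tau_I'$ on $Z(\eta)\cap M_I$ propagates through the induction (because the Clifford decomposition \eqref{equation_level_3} works with $\tau_I'$ in place of $\tau_I$ by the same projection-formula argument), so the irreducible subrepresentation $\Gamma$ lands inside $\ind_{P_I\cap K_{n+1}}^{K_{n+1}}\tau_I'$, hence inside $\res_{K_{n+1}}i_{P_I}^{G_{n+1}}\sigma_I'$ for a non-cuspidal $\sigma_I'$ of $M_I$ lifting $\tau_I'$. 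Since $\sigma_I'$ is non-cuspidal, its cuspidal support lies on a strictly smaller Levi, so $I(\pi)\neq[M_I,\sigma_I]$ for any $\pi$ containing $\Gamma$. This side-steps entirely the bookkeeping you flag as the ``main obstacle'': there is no need to run a Mackey decomposition against a refined or coarsened parabolic, because one stays at $P_I$.

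Your proposed treatment of case (2) in particular is a wrong turn. Embedding into a parabolic induction from the \emph{coarser} Levi $\g{n_i+n_r}{F}$ does not by itself give a different inertial class---one would then have to prove that the representation one constructs on the merged block has cuspidal support different from $\sigma_i\boxtimes\sigma_r$, which is exactly the delicate issue (it was resolved in the $n_r=1$ case by invoking Casselman's explicit restriction theorem for $\g{2}{\integers{F}}$ together with the existence of a level-zero cuspidal with a prescribed central character). For $n_r>1$ no such explicit result is available, and the paper precisely avoids needing one by the uniform application of Lemma \ref{lemma_level_5_6} described above.
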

\begin{proof}

We observe that
$$\ind_{P_I(1,m+1)}^{P_I(1)}(\tau_I)\simeq \ind_{P_I(1,m)}^{P_I(1)}\{\ind_{P_I(1,m+1)}^{P_I(1,m)}(\tau_I)\}$$
and the decomposition (\ref{equation_level_3}) gives us the isomorphism 
$$\ind_{P_I(1,m+1)}^{P_I(1)}(\tau_I)= \ind_{P_I(1,m)}^{P_I(1)}(\tau_I)
\oplus 
\bigoplus_{\eta_k\neq \id}\ind_{P_I(1,m)}^{P_I(1)}\{(\ind_{Z(\eta_k)}^{P_I(1,m)}(U_{\eta_k})\otimes\tau_I\}$$
which gives the equality 
$$U_{(1,m+1)}^0(\tau_I)=U_{(1,m)}^0(\tau_I) \oplus 
\bigoplus_{\eta_k\neq \id}\ind_{P_I(1,m)}^{P_I(1)}\{(\ind_{Z(\eta_k)}^{P_I(1,m)}(U_{\eta_k}))\otimes\tau_I\}.$$

If we show that the irreducible sub-representations of
$$\ind_{P_I(1,m)}^{K_{n+1}}\{(\ind_{Z(\eta_k)}^{P_I(1,m)}(U_{\eta_k}))\otimes\tau_I\}$$
(for $\eta_k\neq \id$) are atypical for $[M_I, \sigma_I]$ then
induction on the positive integer $m$ completes the proof of the
proposition in this case.  To begin with we note that
\begin{align*}
\ind_{P_I(1,m)}^{K_{n+1}}\{\ind_{Z(\eta_k)}^{P_I(1,m)}(U_{\eta_k})\otimes\tau_I\}
  \simeq  \ind_{P_I(1,m)}^{K_{n+1}}
\{\ind_{Z(\eta_k)}^{P_I(1,m)}(U_{\eta_k}\otimes\res_{Z(\eta_k)\cap M_I}\tau_I)\}.
\end{align*}

The representation $\tau_I$ is trivial on $M_I\cap K_{n+1}(1)$. Hence
$\res_{Z(\eta_k)\cap M_I}\tau_I$ is isomorphic to the inflation of the
representation $\res_{\overline{Z(\eta_k)\cap M_I}}\tau_I$ where
$\overline{Z(\eta_k)\cap M_I}$ is mod-$\ideal{F}$ reduction of
${Z(\eta_k)\cap M_I}$.  Let $A=\theta_I(\eta_k)$ where $\theta_I$ is
the map defined in equation \eqref{equation_level_2.3}. The
mod-$\ideal{F}$ reduction $\overline{Z(\eta_k)\cap M_I}$ is contained
in $Z_{P_{I'}(k_F)\times\g{n_r}{k_F}}(A)$. If $\eta_k$ is a nontrivial
character then $A\neq 0$.  Applying Lemma \ref{lemma_level_4} to the
parabolic corresponding to the partition $I=(n_1, n_2, \dots , n_r)$
and the assumption that $n_r>1$ shows that
$$Z_{P_{I'}(k_F)\times\g{n_r}{k_F}}(A) \cap M_I(k_F)$$
has trivial intersection with a unipotent subgroup $U$ of
$M_I(k_F)$. If $A$ satisfies condition $(1)$ of the Lemma
\ref{lemma_level_4} then let $U_j$ be the unipotent radical of the
opposite parabolic subgroup containing the image of $p_j$ (see Lemma
\ref{lemma_level_4}) and if $A$ satisfies condition $(2)$ of
\ref{lemma_level_4} then let $U_r$ be any unipotent radical of a
parabolic subgroup of $\g{n_r}{k_F}$. Now $U$ can be chosen to be the
group
$$\{(u_1, u_2, \dots, u_r) \ \in M_I(k_F) \ | u_j \in U_j \
\text{and} \ u_k=1_k \ \forall \ k\neq j\}$$
if $A$ satisfies condition $(1)$ in Lemma \ref{lemma_level_4} and $U$ to
be
$$\{(u_1, u_2, \dots, u_r) \ \in M_I(k_F) \ | u_r \in U_r \
\text{and} \ u_k=1_k\ \forall \ k\neq r\}$$
if $A$ satisfies the condition $(2)$ in Lemma \ref{lemma_level_4}. 

Now applying Lemma \ref{lemma_level_5_6} to the reductive group
$G=M_I$ and the representation $\tau=\tau_I$, we get that every
irreducible subrepresentation $\Gamma$ of

$$\ind_{P_I(1,m)}^{K_{n+1}}\{\ind_{Z(\eta_k)}^{P_I(1,m)}(U_{\eta_k}\otimes\res_{Z(\eta_k)\cap
  M_I}\tau_I)\}$$
occurs as a subrepresentation of some representation of the form 
\begin{equation}\label{equation_last_1}
\ind_{P_I(1,m)}^{K_{n+1}}\{\ind_{Z(\eta_k)}^{P_I(1,m)}(U_{\eta_k}\otimes\res_{Z(\eta_k)\cap
  M_I}\tau_I')\},
\end{equation}
with $\tau_I'$  some non-cuspidal representation of $M_I(k_F)$. The
representation \ref{equation_last_1} occurs as a subrepresentation of 
$$\ind_{P_I\cap K_{n+1}}^{K_{n+1}}\tau_I',$$
which occurs as a subrepresentation of 
$$\res_{K_{n+1}}i_{P_I}^{G_{n+1}}\sigma_I'$$
where $\sigma_I'$ is a non-cuspidal representation such that
$\sigma_I'^{K_{n+1}(1)\cap M_I}\simeq \tau_I'$. 
Hence the representation $\Gamma$ is not a typical representation for
the component $[M_I, \sigma_I]$. 

 This completes the proof of the proposition and also the proof of the theorem.
\end{proof}

\end{proof}

\bibliography{./biblio}
\bibliographystyle{amsalpha}

\noindent Santosh Nadimpalli,\\
School of Mathematics, Tata Institute of Fundamental Research,
Mumbai, 400005.\\
\texttt{nvrnsantosh@gmail.com}, \texttt{nsantosh@math.tifr.res.in}

\end{document}